\title[On the Cantor--Bendixson rank]{On the Cantor--Bendixson rank of the Grigorchuk group and the Gupta--Sidki $3$ group}
\author{Rachel Skipper}
\author{Phillip Wesolek}
\thanks{The first named author was partially supported by a grant from the Simons Foundation (\#245855 to Marcin Mazur).}
\newtheorem{thm}{Theorem}[section]
\newtheorem{obs}[thm]{Observation}
\newtheorem{prop}[thm]{Proposition}
\newtheorem{lem}[thm]{Lemma}
\newtheorem{cor}[thm]{Corollary}
\theoremstyle{definition}
\newtheorem{defn}[thm]{Definition}
\newtheorem{quest}[thm]{Question}
\newtheorem*{ack}{Acknowledgments}
\newcommand{\Nb}{\mathbb{N}}
\newcommand{\mc}[1]{\mathcal{#1}}
\newcommand{\wt}[1]{\widetilde{#1}}
\newcommand{\acts}{\curvearrowright}
\newcommand{\Aut}{\mathrm{Aut}}
\newcommand{\rist}{\mathrm{rist}}
\newcommand{\rrist}{\mathrm{st}}
\newcommand{\st}{\mathrm{st}}
\newcommand{\cb}{\mathrm{rk_{CB}}}
\newcommand{\sub}{\mathrm{Sub}}
\newcommand{\rest}{\upharpoonright}
\newcommand{\normal}{\trianglelefteq}
\newcommand{\depth}{\mathrm{depth}}
\newcommand{\comm}{\mathrm{Comm}}
\newcommand{\grp}[1]{\langle #1 \rangle}
\begin{document}

\begin{abstract}
We study the Cantor--Bendixson rank of the space of subgroups for members of a general class of finitely generated self-replicating branch groups. In particular, we show for $G$ either the Grigorchuk group or the Gupta--Sidki $3$ group, the Cantor--Bendixson rank of $\sub(G)$ is $\omega$. For each natural number $n$, we additionally characterize the subgroups of rank $n$ and give a description of subgroups in the perfect kernel.
\end{abstract}

\maketitle

\section{Introduction}
Given a group $G$, the collection of subgroups, $\sub(G)$, admits a canonical totally disconnected compact topology, called the Chabauty topology, first introduced by Chabauty in \cite{Cha50}. We note that some authors only consider the space of normal subgroups, a restriction we do not make here.  The Chabauty space was used by Gromov \cite{Gro81} and also Grigorchuk \cite{Gri84} to study certain asymptotic properties of discrete groups.  This space has been most studied in the case where the group is a free group of finite rank, see \cite{Cha00}, \cite{CG05}, and \cite{CGP07} for instance. In this case, the Chabauty space is often instead referred to as the space of marked groups.  Nevertheless, fairly little is known about the structure of $\sub(G)$ for a general group.

A natural invariant of compact topological spaces is the Cantor--Bendixson rank, which is an ordinal. The Cantor--Bendixson rank deals with understanding isolated points in the space and also understanding what new isolated points are created when these points are removed from the space. In \cite{CGP07}, Cornulier, Guyot, and Pitsch determined the isolated points in the space of marked groups. This study was extended in \cite{Cor11} where Cornulier determined the Cantor--Bendixson rank for the classes of finite groups, simple groups, and finitely generated nilpotent groups considered as points in the space of marked groups.  He also exhibited a sequence $(G_n)$ of 2 generated virtually metabelian groups with Cantor--Bendixson rank $\omega^n$. For $G$ a countable abelian group, a thorough study of $\sub(G)$ was done in \cite{CGP10} where they classify $\sub(G)$ up to homeomorphism and determine the Cantor--Bendixson rank and also an extended version of the Cantor--Bendixson rank for this class.

One naturally wishes to further understand the connections between this topological invariant and the subgroup structure of the group under consideration.  In this vein, we here answer the following question, posed by R. Grigorchuk.

\begin{quest}[Grigorchuk] 
What is the Cantor--Bendixson rank of $\sub(\Gamma)$ for $\Gamma$ the Grigorchuk group? 
\end{quest}

\begin{thm}[see Corollary~\ref{cor:grigorchuk}]\label{thm:grigorchuk_intro}
For $G$ either the Grigorchuk group or the Gupta--Sidki $3$ group, the Cantor--Bendixson rank of $\sub(G)$ is $\omega$.
\end{thm}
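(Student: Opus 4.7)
The plan is to analyze $\sub(G)$ through the branch structure of $G$ acting on the rooted tree $\Trd$, exploiting the rigid level stabilizers $\rist_G(n)=\prod_{v\in L_n}\rist_G(v)$ (which are finite-index in $G$) together with the self-replicating isomorphisms $\rist_G(v)\cong G$. I will establish the bounds $\cb(\sub(G))\geq\omega$ and $\cb(\sub(G))\leq\omega$ separately.

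For the lower bound, the strategy is to produce, for each integer $n\geq 0$, a subgroup of $G$ of CB rank exactly $n$. Finite-index subgroups give the base case $n=0$: since $G$ is finitely generated, any finite-index $H\leq G$ is separated from all other subgroups by a basic Chabauty open neighborhood obtained from a finite generating set of $H$ together with a set of non-trivial coset representatives. For $n\geq 1$, I would build $H_n$ inductively using self-similarity: if $H_{n-1}\leq G$ has rank $n-1$, transport $H_{n-1}$ into a proper rigid factor $\rist_G(v)\cong G$ for $v$ a vertex of $\Trd$ at level $1$, and adjoin a suitable finite-index subgroup of the rigid stabilizer of the remaining subtree. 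The self-similar structure should guarantee that the approximating sequences witnessing the CB rank of $H_{n-1}$ lift to approximating sequences witnessing CB rank $n$ for $H_n$, while rank $n-1$ is not exceeded because the adjoined factor contributes a single extra ``layer'' of degeneration.

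For the upper bound, I would show that $\sub(G)^{(\omega)}$ is perfect by characterizing its elements as those subgroups $H$ which, for every $n$, interact non-trivially with $\rist_G(v)$ for infinitely many vertices $v$ at some level beyond $n$ (a condition that no subgroup of finite ``branch complexity'' in the above inductive sense can satisfy). To see that no such $H$ is isolated in $\sub(G)^{(\omega)}$, I would perturb $H$ at an arbitrarily deep vertex $v$ using the self-replicating isomorphism: replace $H\cap\rist_G(v)$ by a nearby but distinct subgroup of $\rist_G(v)\cong G$ that still lies in the perfect kernel (invoking the inductive description). Because modifications deep in the tree are invisible on bounded balls in $G$, the perturbed subgroups Chabauty-converge to $H$, showing $H$ is not isolated.

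The main obstacle will be the upper bound: we need a sharp characterization of which subgroups lie in $\sub(G)^{(n)}\setminus\sub(G)^{(n+1)}$ at each finite stage, and verify that this hierarchy exhausts all subgroups outside the perfect kernel. Here the just-infinite property of $G$ (all proper quotients are finite) is essential, since it forces any subgroup that is ``large'' in the branch sense to contain some $\rist_G(n)$ and hence to be of finite index, ruling out intermediate complications. The self-replicating hypothesis will be used to pass inductively between levels, and finite generation of $G$ will be used to control Chabauty convergence via bounded balls.
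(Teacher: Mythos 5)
There is a genuine gap, and it sits exactly where you flag ``the main obstacle.'' Your upper bound rests on the claim that just-infiniteness forces any subgroup that is ``large in the branch sense'' to contain some $\rist_G(n)$ and hence be of finite index. That is true for \emph{normal} subgroups of a just infinite branch group, but it fails for arbitrary subgroups, and the failure is precisely the interesting phenomenon here: for instance, the diagonal subgroups $H=\grp{f_k\mid k\in K}\leq K^Y$ over a large spanning leaf set $Y$ (where $K$ is the branching subgroup and $\psi_y(f_k)=k$ for all $y\in Y$) have all sections at $Y$ of finite index in $G$, yet have infinite index, contain no rigid level stabilizer, and have arbitrarily large finite depth. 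So your proposed characterization of the levels $\sub(G)^{(n)}$ cannot be carried out from just-infiniteness, self-replication, and finite generation alone. The ingredient that actually makes the dichotomy work for these two specific groups is the ``subgroup induction'' theorem of Grigorchuk--Wilson and of Garrido, which yields the Grigorchuk--Nagnibeda alternative: every finitely generated $H\leq G$ either has a finite section $\psi_y(H)$ at some vertex $y$, or admits a spanning leaf set $Y$ such that $\st_H(Y)$ projects onto a finite-index subgroup of $G$ in every coordinate of $Y$. The paper then shows that subgroups of the first kind (and all non-finitely-generated subgroups) form the perfect kernel, while subgroups of the second kind have finite Cantor--Bendixson rank equal to their depth. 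Your plan contains no substitute for this input, and without it the exhaustion of the hierarchy outside the perfect kernel---hence the bound $\cb(\sub(G))\leq\omega$---does not follow.

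The lower bound as you sketch it has a related problem. To certify that your inductively built $H_n$ has rank exactly $n$ (finite rank of size at least $n$ is what is needed to separate consecutive derivatives), you must control \emph{all} Chabauty-approximating sequences of $H_n$, and subgroups of $G$ close to $H_n$ need not stabilize the chosen vertex $v$ nor respect the decomposition into subtree factors, so ``approximating sequences lift'' is not automatic. In the paper this control comes from several structural results that your outline omits: well-approximated subgroups (equivalently LERF plus the congruence subgroup property), the fact that a subgroup whose leaf-set stabilizer is an infra-direct product is finitely generated and has finite index in its commensurator (so it admits only finitely many finite extensions inside $G$), and the resulting theorem that Cantor--Bendixson rank equals depth for such subgroups; the upward approximations are then produced as $O_i\cap L_n$ with $O_i$ finite-index subgroups intersecting to $H$, using LERF. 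Your ``perturb deep in the tree'' argument for perfectness of the kernel is closer in spirit to the paper's Lemma on subgroups with a finite section, but even there one needs an independent family of $H$-invariant leaf sets and the normality of the branching subgroup to ensure the perturbed sets $HJ_\alpha$ are subgroups; as written, your perturbation step does not guarantee this.
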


The above theorem follows from an analysis of a general class of finitely generated self-replicating branch groups and subsequently showing the Grigorchuk group and the Gupta--Sidki $3$ group are members of this class. This class is given by two properties. 

\begin{defn} 
For $X^*$ a regular rooted tree, a group $G\leq \Aut(X^*)$ is said to have \textbf{well-approximated subgroups} if  $\bigcap_{n\geq 0}H\st_G(n)=H$ for any finitely generated $H\leq G$, where $\st_G(n)$ is the stabilizer of the $n$-th level of the tree.
\end{defn}

Having well-approximated subgroups is exactly the conjunction of two widely studied properties.
\begin{lem}[See Lemma~\ref{lem:well-approximated}]
Let $G$ be a finitely generated subgroup of $ \Aut(X^*)$, then $G$ has well-approximated subgroups if and only if $G$ is LERF and has the CSP.
\end{lem}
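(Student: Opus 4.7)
The plan is to establish both implications by unpacking the definitions; the argument should be short because well-approximated subgroups is almost a literal conjunction of the two properties.

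For the forward direction, I will assume that $G$ has well-approximated subgroups. To get LERF, let $H \leq G$ be finitely generated. Each $\st_G(n)$ has finite index in $G$ (the action on the $n$-th level is by permutations of a finite set), so each $H\st_G(n)$ has finite index and contains $H$. The well-approximated hypothesis then writes $H = \bigcap_n H\st_G(n)$ as an intersection of finite index subgroups, so $G$ is LERF. To get the CSP, let $K \leq G$ be a finite index subgroup. Since $G$ is finitely generated and $K$ has finite index, $K$ is finitely generated (Schreier), so the hypothesis gives $K = \bigcap_n K\st_G(n)$. The sequence $(K\st_G(n))_{n\geq 0}$ is decreasing and every term contains $K$, but there are only finitely many subgroups of $G$ that contain the finite index subgroup $K$. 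Hence the chain stabilizes at some $N$, so $K = K\st_G(N)$, which forces $\st_G(N) \leq K$, establishing the CSP.

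For the converse, I will assume $G$ is LERF and has the CSP, and fix a finitely generated $H \leq G$. The inclusion $H \subseteq \bigcap_n H\st_G(n)$ is automatic. For the reverse, suppose there is some $g \in \bigcap_n H\st_G(n) \setminus H$. By LERF, $H$ is the intersection of the finite index subgroups of $G$ containing it, so there exists a finite index subgroup $L \leq G$ with $H \leq L$ and $g \notin L$. Applying the CSP to $L$, we find $n$ with $\st_G(n) \leq L$. Then $L$ contains both $H$ and $\st_G(n)$, hence $L \supseteq H\st_G(n) \ni g$, contradicting $g \notin L$. Thus $H = \bigcap_n H\st_G(n)$.

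The main potential obstacle is the ``eventually constant'' step in the forward direction for the CSP, but this is a routine consequence of finite index together with the decreasing nature of the chain $(\st_G(n))$. No nontrivial property of branch groups or of $\Aut(X^*)$ beyond the finiteness of each level is used; the lemma is really a transcription of definitions once one recognizes that $\{H\st_G(n)\}$ is a cofinal family of finite index supergroups of $H$ exactly when the CSP holds.
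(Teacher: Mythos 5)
Your proposal is correct and follows essentially the same route as the paper: LERF plus CSP gives finite-index subgroups containing $H$ into which level stabilizers can be pushed (the paper phrases this as a sandwich $H\leq\bigcap_i H\st_G(n_i)\leq\bigcap_i O_i=H$ rather than your element-chasing contradiction, but the idea is identical), and conversely the sets $H\st_G(n)$ witness LERF while the stabilization of the decreasing chain $(K\st_G(n))_n$ over a finite-index $K$ yields the CSP. Your explicit appeal to Schreier's lemma to apply the well-approximated hypothesis to a finite-index subgroup is a point the paper leaves implicit, but otherwise the arguments coincide.
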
 

The next property which we call the Grigorchuk--Nagnibeda alternative seems, at the moment, rather exotic, and currently, only the Grigorchuk group and the Gupta--Sidki $3$ group are known to enjoy it; see Theorem~\ref{thm:G-N_alternative}.  Stating the property requires a weakening of the classical notion of a sub-direct product.

\begin{defn}
For $G$ a group and $Y$ a finite set, we say that $H\leq G^Y$ is an \textbf{infra-direct product} if $\psi_y(H)$ is of finite index in $G$ for all $y\in Y$, where $\psi_y$ is the projection onto the $y$-th coordinate.
\end{defn}

\begin{defn} 
A group $G\leq \Aut(X^*)$ is said to obey the \textbf{Grigorchuk--Nagnibeda alternative} if for every finitely generated subgroup $H\leq G$ either
\begin{enumerate}[(a)]
\item there is $y\in X^*$ such that $\psi_y(H)$ is finite, or
\item there is a spanning leaf set $Y\subseteq X^*$ such that $\rrist_H(Y)$ is an infra-direct product of $G^Y$, where $\st_H(Y)$ is the pointwise stabilizer of $Y$ in $H$.
\end{enumerate}
\end{defn}

We direct the reader to Definition~\ref{defn:spanning leaf set} for the definition of a spanning leaf set. 

The Grigorchuk--Nagnibeda alternative is inspired by a characterization of finitely generated subgroups of the Grigorchuk group given by Grigorchuk and Nagnibeda in \cite{GN08}.

\begin{thm}[See Theorem~\ref{thm:perfect_ker_char}]\label{thm:perfect_ker_char_intro}
Suppose that $G\leq \Aut(X^*)$ is a finitely generated regular branch group that is just infinite, is strongly self-replicating, has well-approximated subgroups, and obeys the Grigorchuk--Nagnibeda alternative. For $H\leq G$, $H$ is in the  perfect kernel of $\sub(G)$ if and only if either
\begin{enumerate}[(1)]
\item $\psi_y(H)$ is finite for some $y\in X^*$, or
\item $H$ is not finitely generated.
\end{enumerate}
\end{thm}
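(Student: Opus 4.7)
Let $\mathcal{K}$ denote the set of subgroups satisfying condition (1) or (2) and let $\mathcal{C} = \sub(G) \setminus \mathcal{K}$. The plan is to establish two complementary assertions: first, every $H \in \mathcal{K}$ is a Chabauty limit of a sequence of distinct subgroups also in $\mathcal{K}$; second, every $H \in \mathcal{C}$ has finite Cantor--Bendixson rank. An induction on $n$ using the first assertion yields $\mathcal{K} \subseteq \sub(G)^{(n)}$ for every finite $n$, hence $\mathcal{K} \subseteq \sub(G)^{(\omega)}$, while the second assertion gives $\sub(G)^{(\omega)} \cap \mathcal{C} = \emptyset$, so $\sub(G)^{(\omega)} \subseteq \mathcal{K}$. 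Combining, $\sub(G)^{(\omega)} = \mathcal{K}$; since each point of this set is accumulated by other points of $\mathcal{K}$, $\sub(G)^{(\omega)}$ is perfect and coincides with the perfect kernel.

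For the first assertion I treat the two defining conditions separately. If $H$ is not finitely generated, exhaust $H$ by a strictly increasing chain $H_0 \leq H_1 \leq \cdots$ of finitely generated proper subgroups with union $H$; this chain converges to $H$ in Chabauty, and each $H_n$ is distinct from $H$. To force each $H_n \in \mathcal{K}$, I augment the generating set at each stage by an element from a rigid stabilizer at a vertex chosen to guarantee that at least one coordinate projection of $H_n$ is finite; the regular branch hypothesis together with strongly self-replicating furnishes a supply of such elements without disturbing convergence. If instead $H$ is finitely generated with $\psi_{y_0}(H)$ finite for some $y_0$, I approximate $H$ by subgroups of the form $\langle H, r_n \rangle$ where each $r_n$ is drawn from a rigid stabilizer at a vertex incomparable to $y_0$ and lying at increasing depth. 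This preserves the finite projection at $y_0$, produces distinct enlargements in $\mathcal{K}$, and gives Chabauty convergence back to $H$ as the support of $r_n$ recedes into the tree.

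For the second assertion I induct on the depth of the spanning leaf set $Y$ produced by option (b) of the Grigorchuk--Nagnibeda alternative for $H \in \mathcal{C}$, denoting this minimal depth $\depth(H)$. When $\depth(H) = 0$ we have $Y = \{\emptyset\}$ and $H$ is itself an infra-direct subgroup of $G$, hence has finite index; well-approximated subgroups (equivalently the congruence subgroup property) then forces $\st_G(n) \leq H$ for some $n$, and the clopen set $\{H' : \st_G(n) \leq H'\}$ is finite, exhibiting $H$ as isolated. For the inductive step, I would show that any $H' \in \mathcal{C}$ sufficiently Chabauty-close to $H$ either equals $H$ or satisfies $H' \supsetneq H$ with $\depth(H') < \depth(H)$, and that only finitely many such strict enlargements appear in each small enough neighborhood. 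This depth-decrease and finiteness claim is the main obstacle: executing it requires combining infra-direct rigidity from the GN alternative with the well-approximated property (to cut neighborhoods by level), strongly self-replicating (to identify the shape of admissible extensions), and just infiniteness (to preclude uncontrolled normal additions). Once established, the inductive hypothesis bounds the rank of each nearby $H' \neq H$, and $H$ itself acquires a finite rank bounded by $\depth(H) + 1$.
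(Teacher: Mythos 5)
Your global frame (show every subgroup satisfying (1) or (2) is a Chabauty limit of others of the same kind, show every subgroup in the complement $\mathcal{C}$ has finite rank, conclude $\sub(G)^{\omega}=\mathcal{K}$ is the perfect kernel) is sound and parallels the paper, but both pillars are left with genuine holes. In the non-finitely-generated case of your first assertion, the proposed fix of ``augmenting the generating set \ldots to guarantee that at least one coordinate projection of $H_n$ is finite'' cannot work: enlarging a group only enlarges its sections, and adjoining rigid-stabilizer elements not lying in $H$ also threatens both the convergence $H_n\to H$ and the containment $H_n\leq H$. The point you are missing is that no augmentation is needed: if some finitely generated $B\leq H$ fell under case (b) of the Grigorchuk--Nagnibeda alternative, then $\st_H(Y)\supseteq \st_B(Y)$ would be an infra-direct product and $H$ would be finitely generated (Lemma~\ref{lem:subdirect_fg}, which rests on the lower-leaf-set Lemma~\ref{lem:lower_leaf_set_key_lem}); hence every finitely generated subgroup of $H$ automatically has a finite section and lies in $\mathcal{K}$. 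In the finitely generated case with $\psi_{y_0}(H)$ finite, taking $r_n$ in a rigid stabilizer at a single vertex incomparable to $y_0$ is not enough: $H$-conjugates of $r_n$ are supported at the $H$-translates of that vertex, which may be comparable to $y_0$, so neither the finiteness of the section at $y_0$ nor membership of $\grp{H,r_n}$ in $\mathcal{K}$ is preserved, and convergence ($g\notin\grp{H,r_n}$ eventually, for each $g\notin H$) is unaddressed. The paper handles exactly these issues with $H$-invariant independent families of leaf sets on which all sections of $H$ are finite (Lemmas~\ref{lem:shadow} and~\ref{lem:indp_fam}) together with well-approximatedness to trap the enlarged groups inside $H\rist_G(M)$ (Lemma~\ref{lem:blockform_1^n}).

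For the second assertion, your base case is fine, but the inductive step---which you yourself flag as ``the main obstacle''---is where the actual content of the theorem lives, and your induction parameter is the wrong one. If $H'\supseteq H$ then the same spanning leaf set $Y$ witnessing infra-directness for $\st_H(Y)$ witnesses it for $\st_{H'}(Y)$, so the minimal depth of a witnessing leaf set does not strictly decrease when passing to overgroups of infinite index (block-diagonal copies of the branching subgroup over a fixed $Y$ give overgroups of infinite index with the same minimal witnessing depth), so the claimed depth-decrease is unsubstantiated. The paper instead inducts on the subgroup-chain depth, for which the decrease under infinite-index overgroups is automatic, and the two hard inputs are precisely what your sketch omits: finiteness of that depth (Lemma~\ref{lem:finite_depth}, via systems of lower leaf sets) and the fact that only finitely many subgroups contain $H$ with finite index (Corollary~\ref{cor:fin_extensions_branch}, through the commensurator bound of Lemma~\ref{lem:comm_branch}, which needs LERF/separability and the Caprace--Kropholler--Reid--Wesolek theorem). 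Note also that you must control \emph{all} points near $H$, not only those in $\mathcal{C}$; this is true because nearby points contain the finitely generated $H$ and so inherit infra-directness, but it needs to be said. As written, the proposal reduces the theorem to unproved claims that constitute essentially the paper's Sections 3 and 4.
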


See Definition~\ref{def:strongly} for the definition of strongly self-replicating.

The subgroups that do not lie in the perfect kernel have finite Cantor--Bendixson rank, and this topologically defined rank is equal to an algebraically defined rank. For a subgroup $L$, we write $\cb(L)$ for the associated Cantor--Bendixson rank of $L$ as a point in $\sub(G)$.

\begin{defn} 
For $G$ a group and $L\leq G$, the \textbf{depth} of $L$ in $G$ is the supremum of the natural numbers $n$ for which there is a series of subgroups $G=H_0>H_1>\dots>H_n=L$ such that $|H_i : H_{i+1}|=\infty$ for all $i$. Depth zero subgroups are of finite index. We denote the depth by $\depth(L)$. 
\end{defn}

\begin{thm}[See Corollary~\ref{cor:complement_of_ker}]
Suppose that $G\leq \Aut(X^*)$ is a finitely generated regular branch group that is just infinite, is strongly self-replicating, has well-approximated subgroups, and obeys the Grigorchuk--Nagnibeda alternative. If $L\leq G$ is not a member of the perfect kernel of $\sub(G)$, then
\begin{enumerate}
\item $L$ is finitely generated, 
\item $\cb(L)<\omega$, and
\item $\cb(L)=\depth(L)$.
\end{enumerate}
\end{thm}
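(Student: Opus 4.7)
Claim (1) is immediate from Theorem~\ref{thm:perfect_ker_char_intro}: a subgroup of $G$ lies outside the perfect kernel exactly when it is finitely generated and every coordinate projection $\psi_y$ is infinite. I would then apply the Grigorchuk--Nagnibeda alternative to $L$: case (a) is ruled out, so case (b) furnishes a spanning leaf set $Y \subseteq X^*$ with $\st_L(Y)$ an infra-direct product in $G^Y$; write $N_y := \psi_y(\st_L(Y))$, which is of finite index in $G$.

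Claims (2) and (3) are proved simultaneously by induction on $\depth(L)$, with finiteness of the depth emerging from the argument. The base case $\depth(L) = 0$ is standard: $L$ has finite index in $G$, hence is finitely generated with finitely many non-trivial cosets, so the basic Chabauty open set with $F_1$ a generating set for $L$ and $F_2$ a full set of non-identity coset representatives singles $L$ out; thus $\cb(L) = 0$.

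For the inductive step, fix $\depth(L) = n \geq 1$ with witnessing chain $G = H_0 > H_1 > \cdots > H_n = L$ and assume the result for strictly smaller depth. For the lower bound $\cb(L) \geq n$, I would exploit the well-approximated subgroups property of $G$: write $L = \bigcap_i N_i$ with each $N_i$ of finite index in $G$, and set $M_j := H_{n-1} \cap \bigcap_{i \leq j} N_i$. Each $M_j$ is a finite-index subgroup of $H_{n-1}$ properly containing $L$ (since $[H_{n-1}:L] = \infty$). A quick chain manipulation -- replacing $H_{n-1}$ by $M_j$ in the witnessing chain for $L$, and noting that appending $L$ would contradict $\depth(L) = n$ -- shows $\depth(M_j) = n - 1$, so by the inductive hypothesis $\cb(M_j) = n - 1$. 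Moreover, the $M_j$ decrease to $L$ and therefore converge to $L$ in Chabauty, so $L$ is a limit of distinct points of Cantor--Bendixson rank $n - 1$, establishing $\cb(L) \geq n$.

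For the upper bound $\cb(L) \leq n$, I would produce a Chabauty neighborhood $U$ of $L$ such that every $K \in U \setminus \{L\}$ satisfies $\depth(K) \leq n - 1$ and hence $\cb(K) \leq n - 1$ by induction. Taking $F_1$ to contain a generating set for $L$ forces $K \supseteq L$; if $[K : L] = \infty$, then extending any chain ending at $K$ by the step $K > L$ would contradict $\depth(L) = n$, so $\depth(K) \leq n - 1$. The main obstacle is the remaining case $K \supsetneq L$ with $[K : L] < \infty$ (where $\depth(K) = n$): a priori infinitely many such extensions could be Chabauty-close to $L$. The resolution combines well-approximated subgroups -- which allows one to reduce modulo $\st_G(m)$ for $m$ sufficiently large so that any finite-index extension of $L$ projects into the finite quotient $G/\st_G(m)$ -- with the infra-direct product structure of $\st_L(Y)$ and just-infiniteness of $G$, which together force each $\psi_y(\st_K(Y))$ into the finite collection of finite-index subgroups of $G$ containing $N_y$. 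This reduces us to finitely many nearby proper finite-index supergroups of $L$, any one of which can be excluded by adding an element of $K \setminus L$ to $F_2$.
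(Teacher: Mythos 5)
Your overall skeleton is the same as the paper's (Theorem~\ref{thm:sub-direct_rank}): induct on $\depth(L)$, get the lower bound by intersecting a group one step up the chain with a decreasing family of finite-index subgroups furnished by LERF, and get the upper bound by noting that infinite-index overgroups of $L$ have smaller depth while somehow controlling the finite-index overgroups. The base case and the lower bound are fine. But there are two genuine gaps, and they are exactly where the paper's real work lies. First, the finiteness of $\depth(L)$ does not ``emerge from the argument'': your induction is an induction \emph{on} $\depth(L)$ and so presupposes it is finite, and nothing in your sketch rules out $\depth(L)=\infty$ (in which case the statement $\cb(L)<\omega$ would not follow). In the paper this is a separate nontrivial fact, Lemma~\ref{lem:finite_depth}, proved by a pigeonhole argument on the subsets $W\subseteq Y_n$ produced by the lower-leaf-set machinery of Lemma~\ref{lem:lower_leaf_set_key_lem}; that machinery appears nowhere in your proposal.

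Second, your treatment of the finite-index extensions $K$ of $L$ does not work as sketched. Knowing that each projection $\psi_y(\st_K(Y))$ lies in the finite set of subgroups between $N_y$ and $G$ does not bound the number of possible $K$: a subgroup of $G^Y$ is not determined by its coordinate projections (fiber products give continuum many subgroups with the same projections), and in any case $K$ is not determined by $\st_K(Y)$. Likewise, reducing modulo $\st_G(m)$ does not help, because the level $m$ needed to separate a given extension $K$ from $L$ depends on $K$, so a priori infinitely many distinct finite-index extensions could still meet every basic neighborhood of $L$ --- which is precisely the obstacle you identified. The paper closes this by proving that there are only finitely many subgroups $K$ with $L\leq K\leq G$ and $|K:L|<\infty$ (Corollary~\ref{cor:fin_extensions_branch}), which rests on $|\comm_G(L):L|<\infty$ (Lemma~\ref{lem:comm_branch}); that lemma in turn needs the injective, finite-index-image projection $\psi_W$ from Lemma~\ref{lem:lower_leaf_set_key_lem}, separability of $\st_L(Y)$ (LERF together with finite generation, Lemma~\ref{lem:subdirect_fg}), the Caprace--Kropholler--Reid--Wesolek theorem on separable commensurated subgroups (Theorem~\ref{thm:separable1}), and triviality of the FC-center of $G$ (Lemma~\ref{lem:ji_fc}). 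Without some substitute for this commensurator argument, your upper bound, and hence claims (2) and (3), is not established.
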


\begin{cor}[See Corollary~\ref{cor:CB-rank_GN-alt}]
Suppose that $G\leq \Aut(X^*)$ is a finitely generated regular branch group that is just infinite, is strongly self-replicating, has well-approximated subgroups, and obeys the Grigorchuk--Nagnibeda alternative, then $\sub(G)$ has Cantor--Bendixson rank $\omega$.
\end{cor}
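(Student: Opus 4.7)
First, the upper bound $\cb(\sub(G)) \leq \omega$ is immediate from Corollary~\ref{cor:complement_of_ker}: every subgroup of $G$ outside the perfect kernel has finite Cantor--Bendixson rank equal to its depth, so the $\omega$-th Cantor--Bendixson derivative $\sub(G)^{(\omega)}$ coincides with the perfect kernel, which is perfect by definition; hence $\sub(G)^{(\omega+1)} = \sub(G)^{(\omega)}$.

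For the reverse inequality, my plan is to exhibit, for each $n \in \Nb$, a finitely generated subgroup $L_n \leq G$ with $\psi_y(L_n)$ infinite for every $y \in X^*$ and $\depth(L_n) \geq n$; then by Corollary~\ref{cor:complement_of_ker} and Theorem~\ref{thm:perfect_ker_char_intro}, such $L_n$ lies in $\sub(G)^{(n)}$ but outside the perfect kernel, giving $\cb(\sub(G)) > n$ for every $n$.

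I would construct $L_n$ as the preimage under the self-replicating embedding $\psi \colon \st_G(1) \hookrightarrow G^X$ of a suitable infra-direct product of $G^X$. First, iterated rigid stabilizers along a ray give a chain $G > N_1 > \cdots > N_n$ with $[N_i : N_{i+1}] = \infty$: take $N_i := \rist_G(x^i)$ for a fixed $x \in X$, using that strong self-replication identifies $\rist_G(x^i) \cong G$ and carries $\rist_G(x^{i+1})$ to the infinite-index subgroup $\rist_G(x) \leq G$. Next, fixing distinct $y_1, y_2 \in X$, I set
\[
\Delta_i := \bigl\{(g_y)_{y \in X} \in G^X : g_{y_1}^{-1} g_{y_2} \in N_i\bigr\},
\]
an infra-direct product of $G^X$ with $\psi_y(\Delta_i) = G$ for every $y \in X$ and with $[\Delta_i : \Delta_{i+1}] = [N_i : N_{i+1}] = \infty$. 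Letting $P := \psi(\st_G(1))$, which has finite index in $G^X$ by the regular branch property, I define $L_n := \psi^{-1}(\Delta_n \cap P)$.

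The hard part will be verifying the three properties of $L_n$. Finite generation follows because $\Delta_n$ is an extension of $G^{X \setminus \{y_2\}}$ by the finitely generated $N_n$, so $\Delta_n \cap P$ is finitely generated as a finite-index subgroup of $\Delta_n$, and $L_n$ inherits this. Infinite projections follow because $\psi_y(\Delta_n \cap P)$ is a finite-index subgroup of $G$ for each $y \in X$, with strong self-replication propagating infinitude to all deeper vertices. The depth bound is witnessed by the chain $H_i := \psi^{-1}(\Delta_i \cap P)$ yielding $G > H_1 > \cdots > H_n = L_n$ with infinite-index inclusions at every step: the first step is infinite because $[\st_G(1) : H_1] = [P : \Delta_1 \cap P]$ is infinite while $[G : \st_G(1)]$ is finite, and each subsequent step has $[H_i : H_{i+1}] = \infty$ because $(\Delta_i \cap P)/(\Delta_{i+1} \cap P)$ embeds into $\Delta_i/\Delta_{i+1}$ as a subgroup of finite index at most $[G^X : P]$, hence has infinite order.
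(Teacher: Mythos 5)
Your upper bound and your overall strategy (produce, for each $n$, a finitely generated subgroup of depth at least $n$ lying outside the perfect kernel) are in the same spirit as the paper, but the object at the heart of your lower bound is not well defined: the set $\Delta_i=\{(g_y)_{y\in X}\in G^X: g_{y_1}^{-1}g_{y_2}\in N_i\}$ is not a subgroup of $G^X$ unless $N_i$ is normal in $G$. For $(g_y),(h_y)\in\Delta_i$ one has $(g_{y_1}h_{y_1})^{-1}(g_{y_2}h_{y_2})=h_{y_1}^{-1}(g_{y_1}^{-1}g_{y_2})h_{y_1}\cdot h_{y_1}^{-1}h_{y_2}$, so closure forces $h_{y_1}^{-1}N_ih_{y_1}=N_i$ for every $h_{y_1}\in\psi_{y_1}(\Delta_i)=G$, i.e.\ $N_i\normal G$. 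But $N_i=\rist_G(x^i)$ is never normal in a level-transitive branch group, since $g\rist_G(x^i)g^{-1}=\rist_G(g(x^i))$. Hence $\Delta_n\cap P$, $L_n$, and your depth chain $H_i$ are not groups, and the argument collapses. Moreover the obvious repair, replacing $N_i$ by a normal subgroup, is barred by the hypotheses themselves: $G$ is just infinite, so every nontrivial normal subgroup has finite index, and then $[\Delta_i:\Delta_{i+1}]$ would be finite and the depth would not grow. A separate error: strong self-replication asserts $\psi_x(\st_G(n))=G$ for $x\in X^n$; it does not identify $\rist_G(x^i)$ with $G$ (in the Grigorchuk group, $\psi_v(\rist_G(v))$ is a proper subgroup of $G$). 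The fact $[N_i:N_{i+1}]=\infty$ is still true, but for the elementary reason that $\rist_G(x^i)$ contains the infinite group $\rist_G(x^iy)$ for $y\neq x$, which meets $\rist_G(x^{i+1})$ trivially.

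Your fiber-product idea could be salvaged by restricting the $y_1,y_2$ coordinates to $\st_G(x^i)$, which normalizes $\rist_G(x^i)$ and has finite index in $G$, but the paper's construction sidesteps all of this: take a spanning leaf set $Y$ with $|Y|>n$ and let $H:=\grp{f_k\mid k\in K}$ be the diagonal copy of the branching subgroup $K$ inside $K^Y$, where $\psi_y(f_k)=k$ for all $y\in Y$. Then $H$ is an infra-direct product of $G^Y$, its depth is at least $|Y|-1$ (undiagonalize one coordinate at a time; each step has infinite index because $K$ is infinite), and Theorem~\ref{thm:sub-direct_rank} gives $\cb(H)=\depth(H)\geq n$ directly, without any need to check section-wise infinitude against the perfect kernel characterization. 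The moral difference is that in a just infinite group the large depth must come from increasing the number of coordinates of the leaf set, not from a descending chain of infinite-index "congruence" conditions in a single factor, which is exactly where your construction runs aground.
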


\begin{ack} 
We would like to thank R. Grigorchuk for suggesting the project and for his many insightful remarks. We would also like to thank T. Nagnibeda for her many helpful comments. The second named author began the project during a visit to the American Institute of Mathematics; he thanks the institute for its hospitality. 
\end{ack}

\section{Preliminaries}

\subsection{Polish spaces and the Cantor--Bendixson derivative}

\
In this section, we introduce and define the Cantor--Bendixson rank. For additional discussion, the reader is directed to \cite{Kec95}.

\begin{defn} A topological space $X$ is called a \textbf{Polish space} if the topology of $X$ is separable and can be given by a complete metric on $X$. That is to say, $X$ is the topological space left over when one takes a complete separable metric space and forgets the metric.
\end{defn}

For any topological space $X$, the \textbf{derived set} $X'$ is the set of all limit points of $X$. It is easy to see that $X'=X\setminus Y$ where $Y$ is the subset of isolated points. Thereby, $X'$ is a closed subset of $X$.

 Derived sets allow us to define the $\beta$-th Cantor--Bendixson derivative, where $\beta$ is an ordinal. The \textbf{$\beta$-th Cantor--Bendixson derivative}, denoted by $X^{\beta}$, is defined by transfinite recursion as follows:
\begin{enumerate}[$\bullet$]
\item $X^0:=X$
\item $X^{\alpha+1}:=(X^{\alpha})'$
\item for $\lambda$ a limit ordinal, $X^{\lambda}:=\bigcap_{\alpha<\lambda}X^{\alpha}$.
\end{enumerate}

\begin{defn} For a Polish space $X$, the \textbf{Cantor--Bendixson rank} is the least $\alpha$ such that $X^{\alpha}=X^{\alpha+1}$. We denote the Cantor--Bendixson rank by $\cb(X)$.
\end{defn}

Since Polish spaces are Lindel\"{o}f, it follows that for any Polish space $X$, $\cb(X)$ is a countable ordinal. The $\cb(X)$-th derivative $X^{\cb(X)}$ is a perfect topological space; that is, every point is a limit point. The set $X^{\cb(X)}$ is called the \textbf{perfect kernel} of $X$.

This definition of rank easily extends to a rank on the points in a Polish space.

\begin{defn} For a Polish space $X$ and $x\in X$, the \textbf{Cantor--Bendixson rank} of $x$ is the ordinal $\alpha$ such that $x\in X^{\alpha}\setminus X^{\alpha+1}$. If no such $\alpha$ exists, we say $x$ has infinite rank. We denote the Cantor--Bendixson rank of $x\in X$ by $\cb(x)$. 
\end{defn}

\subsection{The Chabauty space}
Given a countable set $N$, the power set $\mc{P}(N)$ admits a compact Polish topology by identifying each element $X\in \mc{P}(N)$ with the indicator function for $X$ in $\{0,1\}^{N}$. Given a countable group $G$, the powerset $\mc{P}(G)$ is then a compact Polish space. It is an easy exercise to see that 
\[
\sub(G):=\{H\in \mc{P}(G)\mid H\leq G\}
\]
is a closed subset of $\mc{P}(G)$, hence $\sub(G)$ is a compact Polish space. The space $\sub(G)$ with this topology is called the \textbf{Chabauty space} of $G$. 

The topology of the Chabauty space $\sub(G)$ has a clopen basis consisting of sets of the form
\[
O_{A,C}:=\{H\in \sub(G)\mid \forall a\in A\;a\notin H\text{ and }\forall c\in C\; c\in H\}
\]
where $A$ and $C$ range over finite subsets of $G$.

For $H\leq G$, the Cantor--Bendixson rank of the subgroup $H$ is defined to be $\cb(H)$ where $H$ is considered as an element of $\sub(G)$. 

Let us make a few easy observations, which follow from the existence of the aforementioned basis. These observations will later be used implicitly.

\begin{obs}
For $G$ a group and a finitely generated subgroup $H\leq G$ with generating set $S$, $O_{\emptyset, S}$ is a neighborhood of $H$, so if $(H_i)_{i\in \Nb}$ is a sequence converging to $H$, then $H\leq H_i$ for all $i$ sufficiently large. 
\end{obs}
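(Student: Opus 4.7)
The plan is to argue directly from the description of the clopen basis of the Chabauty topology that was just introduced. First, I would verify that $H$ itself lies in the basic open set $O_{\emptyset,S}$: since $S$ is a generating set of $H$, every element of $S$ is a member of $H$, and the exclusion condition is vacuous because $A=\emptyset$. Thus $O_{\emptyset,S}$ is an open neighborhood of $H$ in $\sub(G)$.

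Having exhibited this neighborhood, the convergence hypothesis $H_i\to H$ immediately yields some $N\in\Nb$ such that $H_i\in O_{\emptyset,S}$ for every $i\geq N$, which by the very definition of $O_{\emptyset,S}$ means $S\subseteq H_i$. Since each $H_i$ is a subgroup of $G$ containing the generating set $S$ of $H$, it follows that $H=\langle S\rangle\leq H_i$ for all $i\geq N$, as claimed.

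There is really no obstacle here; the entire content is a one-line unwinding of the definitions of the basis and of sequential convergence in a Polish space. The conceptual point worth flagging, which explains why the observation is recorded at all, is that finite generation of $H$ is exactly what allows membership in $H$ to be witnessed by finitely many positive conditions $s\in\cdot\,$; this is the feature of the basis $O_{A,C}$ being exploited, and it is the reason the conclusion takes the asymmetric form $H\leq H_i$ rather than equality.
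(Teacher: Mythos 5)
Your argument is correct and is exactly the intended one: the paper records this observation without proof as an immediate consequence of the clopen basis $O_{A,C}$, and your unwinding (with the implicit point that $S$ may be taken finite since $H$ is finitely generated, so $O_{\emptyset,S}$ is indeed a basic neighborhood) is precisely that argument.
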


\begin{obs}\label{obs:fin_index}
If $G$ is a finitely generated group, then every finite index subgroup has Cantor--Bendixson rank zero.
\end{obs}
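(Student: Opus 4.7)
The plan is to show that any finite index subgroup $H$ of a finitely generated group $G$ is an isolated point of $\sub(G)$, which by the definition of Cantor--Bendixson rank gives $\cb(H)=0$. The strategy is to build a basic clopen neighborhood of $H$ that contains no other subgroup.

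First, since $G$ is finitely generated and $|G:H|<\infty$, Schreier's lemma tells us that $H$ is itself finitely generated; let $S$ be a finite generating set for $H$. Then $O_{\emptyset,S}$ is a basic clopen neighborhood of $H$, and by the observation preceding this one, every $K\in O_{\emptyset,S}$ satisfies $H\leq K$.

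Next, I would use the standard fact that a finite index subgroup is contained in only finitely many intermediate subgroups: the subgroups $K$ with $H\leq K\leq G$ inject into the set of subsets of the finite set $G/H$. Enumerate the proper supergroups of $H$ as $K_1,\dots,K_m$, and for each $i$ choose some $k_i\in K_i\setminus H$. Setting $A=\{k_1,\dots,k_m\}$, the basic clopen set $O_{A,S}$ is an open neighborhood of $H$ (since $H$ contains $S$ and avoids each $k_i$), and it excludes every $K_i$. Combined with the containment $H\leq K$ forced by $O_{\emptyset,S}\supseteq O_{A,S}$, this yields $O_{A,S}=\{H\}$, so $H$ is isolated.

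There is no real obstacle here; the only mild point is justifying finiteness of the set of supergroups of $H$, which is a routine consequence of finite index. The rest is a direct unpacking of the basis for the Chabauty topology.
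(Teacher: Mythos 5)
Your proof is correct, and it is exactly the argument the paper has in mind: the observation is stated without proof as an easy consequence of the basic clopen sets $O_{A,C}$, and your argument (finite generation of $H$ via Schreier, finiteness of the set of overgroups of a finite index subgroup, then excluding each one by a point of $A$) is the intended unpacking. No gaps.
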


\begin{lem}\label{lem:sub(G)_convergence}
Let $G$ be a countable group and $(H_i)_{i\in \Nb}$ with $H_i\in \sub(G)$ a sequence converging to a subgroup $H$. If $L\in \sub(G)$, then $H_i\cap L$ converges to $H\cap L$.
\end{lem}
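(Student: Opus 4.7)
The plan is to reduce convergence in $\sub(G)$ to pointwise convergence of indicator functions and then use the fact that intersection corresponds to pointwise product on indicator functions. Recall from the basis description that the Chabauty topology on $\sub(G)$ is inherited from $\{0,1\}^G$ via the identification $H\mapsto \mathbf{1}_H$. Under this identification, $H_i\to H$ in $\sub(G)$ is equivalent to the statement that for every $g\in G$, the value $\mathbf{1}_{H_i}(g)$ is eventually equal to $\mathbf{1}_H(g)$ (split into the two cases $g\in H$ and $g\notin H$, each yielding a basic open neighborhood $O_{A,C}$ that must contain all but finitely many of the $H_i$).

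To conclude, fix any $g\in G$ and note the elementary identities
\[
\mathbf{1}_{H_i\cap L}(g)=\mathbf{1}_{H_i}(g)\,\mathbf{1}_L(g),\qquad \mathbf{1}_{H\cap L}(g)=\mathbf{1}_H(g)\,\mathbf{1}_L(g).
\]
Since $\mathbf{1}_L(g)$ is a fixed constant (either $0$ or $1$) and $\mathbf{1}_{H_i}(g)$ is eventually equal to $\mathbf{1}_H(g)$ by hypothesis, we see that $\mathbf{1}_{H_i\cap L}(g)$ is eventually equal to $\mathbf{1}_{H\cap L}(g)$. Unpacking back through the identification with $\sub(G)$, this gives $H_i\cap L\to H\cap L$. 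There is no real obstacle here; the only thing to be slightly careful about is making the translation from the basic clopen sets $O_{A,C}$ to the pointwise convergence criterion explicit, but this is immediate from the definition of $O_{A,C}$.
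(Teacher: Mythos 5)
Your proof is correct and is essentially the paper's argument: the paper verifies convergence on the subbasic sets $O_{\{a\},\{c\}}$, which is exactly your coordinatewise check that $\mathbf{1}_{H_i}(g)$ is eventually $\mathbf{1}_H(g)$, and the observation that intersecting with $L$ multiplies by the fixed value $\mathbf{1}_L(g)$ matches the paper's case analysis on $a\in L$ or not. No gaps; only the phrasing (indicator functions versus basic clopen sets) differs.
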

\begin{proof}
Fix $A$ and $C$ finite subsets of $G$ such that $O_{A,C}$ is a neighborhood of $H\cap L$. It suffices to consider the case that $A=\{a\}$ and $C=\{c\}$.

For all sufficiently large $i$, we infer that $c\in H_i\cap L$, since $H_i$ converges to $H$. If $a \notin H$, then that $H_i\rightarrow H$ ensures that $a\notin H_i\cap L$ for sufficiently large $i$.  If $a\in  H\setminus (L\cap H)$, then $a$ cannot be an element of $H_i\cap L$, since else $a\in L\cap H$.  Hence, $H_i\cap L\in O_{A,C}$ for all sufficiently large $i$. We thus conclude that $H_i\cap L\rightarrow H\cap L$.
\end{proof}

\subsection{Self-similar and branch groups}

Letting $X$ be a finite set, the free monoid generated by $X$ is denoted by $X^*$. We write $X^n$ to indicate the words of length $n$ in the monoid. One may identify $X^*$ with its Cayley graph, so that $X^*$ is the rooted tree where each vertex has $|X|$ many children. It is, however, often more convenient to simply think of $X^*$ as the free monoid. 

For $x\in X^*$, the \textbf{level of $x$}, denoted by $|x|$, is the length of the word $x$ in the alphabet $X$. If a word $x$ is a prefix of a word $y$, we write $x\sqsubseteq y$. A finite subset $Y\subseteq X^*$ is a \textbf{leaf set} if $Y$ is finite and  $x\cancel{\sqsubseteq} y$ for all distinct $x$ and $y$ in $Y$. A collection of leaf sets $(Y_i)_{i\in I}$ is \textbf{independent} if $\bigcup_{f\in F}Y_f$ is a leaf set for all finite sets $F\subseteq I$. 

\begin{defn}\label{defn:spanning leaf set}
We say that $Y\subseteq X^*$ is a \textbf{spanning leaf set} if it is a leaf set and there is $N$ such that for every $x\in X^*$ with $|x|\geq N$ there is $y\in Y$ with $y\sqsubseteq x$. The least such $N$ is called the \textbf{depth} of $Y$. 
\end{defn}

One checks that every leaf set can be extended to a spanning leaf set. 

The automorphism group of $X^*$, $\Aut(X^*)$, is the set of bijections from $X^*$ to $X^*$ that preserve the prefix relation. That is, $u$ is a prefix of $v$ if and only if $g(u)$ is a prefix of $g(v)$ for any $g\in \Aut(X^*)$. Consequently, for any word $uw$ in $X^*$, $g(uw)=g(u)g_u(w)$ for some other automorphism $g_u\in \Aut(X^*)$ which depends on $u$. We call $g_u$ the \textbf{section of $g$ at $u$}; note that some authors use ``state" instead of ``section." 

For any leaf set $Y\subseteq X^*$ and $G\leq \Aut(X^*)$, we use $G^Y$ to denote the group which acts as copy of $G$ on each subtree rooted at a vertex in $Y$. Note that $G^Y$ is canonically isomorphic to $\prod_{|Y|} G$.

For any $G\leq \Aut(X^*)$, there are several subgroups of particular importance. For any $x\in X^*$, the \textbf{stabilizer of the vertex $x$},  denoted by $\st_G(x)$, is the set of elements in $G$ which fix the vertex $x$. The \textbf{rigid stabilizer of the vertex $x$}, denoted by $\rist_G(x)$, is the set of elements which fix every vertex outside of the subtree rooted at $x$. For $Y\subseteq X^*$ a leaf set, the \textbf{rigid stabilizer of $Y$} is 
\[
\rist_G(Y):=\grp{\rist_G(y)\mid y\in Y}\cong \prod_{x\in Y} \rist_G(x);
\]
Hence, $\rist_G(Y)$ is the internal direct product of the rigid stabilizers of the vertices in $Y$. We also define $\st_G(Y):=\cap_{x\in Y} \st_G(x)$. When $Y=X^n$, we call $\rist_G(X^n)$ the \textbf{rigid stabilizer of the level $n$} and denote it by $\rist_G(n)$. Similarly, we denote $\st_G(X^n)$ by $\st_G(n)$ and call it the stabilizer of the level $n$. Note that for any $Y$, $\rist_G(Y)\leq \st_G(Y)$.

The rigid stabilizer allows one to isolate an important class of groups.
\begin{defn}
A subgroup $G\leq \Aut(X^*)$ is a \textbf{branch group} if it acts transitively on every level and $\rist_G(n)$ has finite index in $G$ for all $n\in \mathbb{N}$.
\end{defn}

For $x \in X^*$, we define the \textbf{section map} $\psi_x:\Aut(X^*)\rightarrow \Aut(X^*)$ by $g \mapsto g_x$, and for a leaf set $Y$, $\psi_Y:=\prod_{x\in Y} \psi_x$. Unless the domain of $\psi_Y$ is restricted to a subgroup of $\st_{\Aut(X^*)}(Y)$, $\psi_Y$ is not a homomorphism. When the domain is restricted to a subgroup of $\st_{\Aut(X^*)}(Y)$, $\psi_Y$ can be thought of as a projection map onto the coordinates in $Y$.

\begin{defn}\label{def:strongly}
A group $G\leq \Aut(X^*)$ is called \textbf{self similar} if $g_x\in G$ for all $g\in G$ and $x\in X^*$.  A self-similar group is called \textbf{self-replicating} if $\psi_x(\st_G(x))=G$ for all $x\in X^*$.  We say that $G$ is \textbf{strongly self-replicating} if $\psi_x(\st_G(n))=G$ for all $x\in X^n$ and $n\geq 1$. 
\end{defn}

For strongly self-replicating groups, $\st_G(n)$ is a subdirect product of $G^{X^n}$. Note that strongly self-replicating is the same notion as super strongly fractal studied in detail in \cite{UA16}.

A self-similar subgroup $G\leq \text{Aut}(X^*)$ is said to be \textbf{regular branch} if it acts transitively on every level and there is a normal subgroup $K$ with finite index in $G$ such that $K^{\{x\}}\leq K$ for all $x \in X^*$ and such that $K^{X^n}$ has finite index in $G$ for all $n$. In this case, $K$ is called a \textbf{branching subgoup} for $G$. If a group $G$ is a regular branch group, then it is also a branch group as $K^{\{x\}}\leq \rist_G(x)$, and therefore, $K^{X^n}\leq \rist_G(n)$.

 For any subgroup $G\leq \Aut(X^*)$, we say $G$ has the \textbf{congruence subgroup property}, or the \textbf{CSP}, if every subgroup of finite index contains a level stabilizer. Since in a branch group $\rist_G(n)\leq \st_G(n)$ and $\rist_G(n)$ has finite index in $G$ for all $n$, a branch group has the CSP if and only if every subgroup of finite index contains a rigid stabilizer and every rigid stabilizer contains a level stabilizer. Many of the most studied branch groups have the congruence subgroup property including the Grigorchuk group \cite{Gr00} and the Gupta--Sidki groups \cite{bgs03}, \cite{Ga16}; these will be discussed in more detail later.

\subsection{Generalities on groups}
A subgroup $L$ of a group $G$ is \textbf{separable} if it is the intersection of finite index subgroups. We say $L$ is separable in $G$ when we wish to emphasize the ambient group. We say that $G$ is \textbf{LERF} if every finitely generated subgroup is separable. 

And element $g\in G$ is said to \textbf{commensurate} $L$, if $|L:L\cap gLg^{-1}|$ and $|gLg^{-1}:L\cap gLg^{-1}|$ are finite. The collection of $g\in G$ that commensurate $L$ is denoted by $\comm_G(L)$. We say that $L$ is \textbf{commensurated} in $G$ if $\comm_G(L)=G$.
 
\begin{thm}[{Caprace--Kropholler--Reid--Wesolek, \cite[Main Theorem]{CKRW17}}]\label{thm:separable1}
	Let $G$ be a group and $L\leq G$ be a separable subgroup. If $G$ is generated by finitely many cosets of $L$ and $L$ is commensurated in $G$, then $L$ contains a finite index  subgroup which is normal and separable in $G$.	
\end{thm}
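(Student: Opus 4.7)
The first move is a reduction: any subgroup $M\leq L$ that is normal in $G$ must lie in the normal core $N:=\bigcap_{g\in G}gLg^{-1}$, which is itself normal in $G$, contained in $L$, and separable (being an intersection of conjugates of the separable subgroup $L$, and separability is preserved under conjugation and intersection). Thus the conclusion of the theorem is equivalent to the finite-index statement $[L:N]<\infty$, after which one may simply take $M:=N$.

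To control $N$, the plan is to pass to the Schlichting (Belyaev) completion of the pair $(G,L)$. Consider the left-multiplication action $G\acts G/L$ and the induced homomorphism $\pi\colon G\to\mathrm{Sym}(G/L)$, where $\mathrm{Sym}(G/L)$ carries the pointwise-convergence topology. The closure $\widehat G:=\overline{\pi(G)}$ is a totally disconnected locally compact (TDLC) group in which $\widehat L:=\overline{\pi(L)}$ is a compact open subgroup, precisely because $L$ is commensurated. One checks that $\ker\pi=N$, that $\pi^{-1}(\widehat L)=L$ (since $\widehat L$ is contained in the stabilizer of the coset $L\in G/L$), and that $L/N\cong\pi(L)$ is dense in the profinite group $\widehat L$. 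Consequently $[L:N]<\infty$ if and only if $\widehat L$ is finite, equivalently $\widehat G$ is discrete. The finite-coset-generation hypothesis $G=\langle L\cup F\rangle$ for a finite set $F\subseteq G$ translates into $\widehat L\cup\pi(F)$ being a compact generating set of $\widehat G$, so $\widehat G$ is compactly generated.

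The core of the proof would then be to exploit separability to construct a compact open normal subgroup $U\triangleleft\widehat G$ that is contained in $\widehat L$. Such a $U$ immediately finishes the argument: $\pi^{-1}(U)$ is normal in $G$, lies in $\pi^{-1}(\widehat L)=L$, and has finite index in $L$ equal to $[\widehat L:U]$. To produce $U$, write $L=\bigcap_{i\in I}L_i$ with each $L_i\supseteq L$ of finite index in $G$, and take normal cores $N_i:=\mathrm{core}_G(L_i)\triangleleft G$, which are again of finite index and satisfy $\bigcap_i N_i=N$. The closures $U_i:=\overline{\pi(N_i)}$ are open finite-index normal subgroups of $\widehat G$, and the intersection of their traces on $\widehat L$ must collapse to the identity; a cofinality argument among the $U_i$ combined with van Dantzig's theorem then yields a single $U_j$ lying inside $\widehat L$, which is the desired compact open normal subgroup of $\widehat G$.

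The main obstacle I anticipate is precisely this last step: forcing some $U_j$ to sit inside $\widehat L$. A general compactly generated TDLC group admits no compact open normal subgroup whatsoever — $\mathrm{PSL}_2(\mathbb Q_p)$ being the canonical counterexample, and such groups can in fact arise as Schlichting completions of pairs $(G,L)$ when separability \emph{fails}. Thus the separability hypothesis on $L$ must be used in an essential way to rule out such simple-like behavior of $\widehat G$. I expect the argument to hinge on showing that the family $\{U_i\}$ provides a neighborhood basis of the identity in $\widehat G$ consisting of open normal subgroups (so that $\widehat G$ is a SIN group), from which compact generation plus the open compactness of $\widehat L$ extracts a suitably small $U_j\leq\widehat L$. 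This interplay between the residually-finite structure inherited from separability and the topological structure of $\widehat G$ is the technical heart of the theorem.
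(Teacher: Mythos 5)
Your reduction to the normal core $N=\bigcap_{g\in G}gLg^{-1}$ and your Schlichting--completion setup are correct, and they do reflect the strategy of \cite{CKRW17} (the paper itself only cites this result and gives no proof): everything comes down to producing a compact open normal subgroup of $\widehat G$ contained in $\widehat L$. The genuine gap is the mechanism you propose for producing it. Each $U_i=\overline{\pi(\mathrm{core}_G(L_i))}$ has \emph{finite index} in $\widehat G$; if $[G:L]=\infty$ then $\widehat G$ is non-compact, so no finite-index subgroup is compact, and hence no $U_j$ can ever lie inside the compact subgroup $\widehat L$. For the same reason the family $\{U_i\}$ can never be a neighbourhood basis of the identity. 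So the last step fails as written, and ``cofinality plus van Dantzig'' cannot repair it: van Dantzig gives compact open subgroups with no normality, and the whole difficulty --- as your own $\mathrm{PSL}_2(\Qp)$ remark indicates --- is to exclude completions having no compact open normal subgroups at all.

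What the $U_i$ actually give, once proved (you only assert it), is $\bigcap_i U_i=\{1\}$: each $U_i$ preserves the partition of $G/L$ into $L_i$-blocks, so an element of the intersection sends $gL$ to some $hL$ with $g^{-1}h\in\bigcap_i L_i=L$ and is trivial by faithfulness; that is, $\widehat G$ is residually discrete. The missing ingredient converting this into a compact open normal $U\leq\widehat L$ is the theorem of Caprace--Monod that a compactly generated, residually discrete, totally disconnected locally compact group has a basis of identity neighbourhoods consisting of compact open normal subgroups --- a substantial structural result proved via Cayley--Abels graphs and quasi-centralisers, not an elementary cofinality argument; this is exactly how \cite{CKRW17} proceeds. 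With it, your outline closes: $U\trianglelefteq\widehat G$ compact open with $U\leq\widehat L$ gives $\pi^{-1}(U)$ normal in $G$ and contained in $L$, hence equal to $N$, so $[L:N]\leq[\widehat L:U]<\infty$, and separability of $N$ comes from your initial reduction.
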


We will also need a new notion for groups acting on trees.

\begin{defn} 
A group $G\leq \Aut(X^*)$ is said to have \textbf{well-approximated subgroups} if  $\bigcap_{n\geq 0}H\st_G(n)=H$ for any finitely generated $H\leq G$.
\end{defn}

\begin{lem}\label{lem:well-approximated}
Let $G$ be a finitely generated subgroup of $ \Aut(X^*)$, then $G$ has well-approximated subgroups if and only if $G$ is LERF and has the CSP.
\end{lem}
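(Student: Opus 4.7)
The plan is to exploit the basic fact that each level stabilizer $\st_G(n)$ is a finite-index normal subgroup of $G$ (since $G$ acts on the finite set $X^n$). Consequently, for any $H\leq G$ the product $H\st_G(n)$ is automatically a subgroup of $G$, and moreover has finite index in $G$. This observation alone makes the equivalence essentially bookkeeping.

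For the easier direction, assume $G$ is LERF and has the CSP, and fix a finitely generated $H\leq G$. By LERF, $H=\bigcap_{i\in I}F_i$ where each $F_i$ is a finite-index subgroup of $G$ containing $H$. By the CSP, for each $i$ there is $n_i$ with $\st_G(n_i)\leq F_i$, hence $H\st_G(n_i)\leq F_i$. Thus
\[
\bigcap_{n\geq 0}H\st_G(n)\;\leq\;\bigcap_{i\in I}H\st_G(n_i)\;\leq\;\bigcap_{i\in I}F_i=H,
\]
and the reverse inclusion is obvious, giving well-approximated subgroups.

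For the converse, suppose $G$ has well-approximated subgroups. To verify LERF, let $H\leq G$ be finitely generated. By hypothesis, $H=\bigcap_{n\geq 0}H\st_G(n)$, and by the opening observation each $H\st_G(n)$ is a finite-index subgroup of $G$ containing $H$; so $H$ is separable. To verify the CSP, let $F\leq G$ be a finite-index subgroup. Since $G$ is finitely generated, so is $F$ (by the standard Schreier argument), so the hypothesis applies to $F$: we have $F=\bigcap_{n\geq 0}F\st_G(n)$. The sequence $(F\st_G(n))_{n\geq 0}$ is a decreasing chain of subgroups lying between $F$ and $G$; since $[G:F]<\infty$, this chain must stabilize at some $n_0$. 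Therefore $F=\bigcap_{n\geq 0}F\st_G(n)=F\st_G(n_0)$, which forces $\st_G(n_0)\leq F$, establishing the CSP.

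The main potential obstacle is the forward direction for the CSP, where one must convert a single intersection equality into a containment $\st_G(n_0)\leq F$ for some individual $n_0$. This is precisely where one uses finite-index of $F$ to stabilize the descending chain $F\st_G(n)$; without finite-index, nothing forces stabilization, which is why the CSP (rather than something weaker) is the correct partner to LERF.
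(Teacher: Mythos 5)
Your proof is correct and follows essentially the same route as the paper: LERF plus CSP squeezes $\bigcap_n H\st_G(n)$ between $H$ and an intersection of finite-index subgroups, while the converse uses that each $H\st_G(n)$ has finite index for LERF and that the descending chain $(F\st_G(n))_n$ over a finite-index $F$ stabilizes for CSP. You even make explicit two points the paper leaves implicit (normality of $\st_G(n)$, so $H\st_G(n)$ is a subgroup, and finite generation of $F$ via Schreier), so there is nothing to fix.
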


\begin{proof}
Suppose that $G$ is LERF and $G$ has the CSP. Fixing a finitely generated $H\leq G$, we may find an $\subseteq$-decreasing sequence of finite index subgroups $O_i$ such that $\bigcap_{i\in \Nb}O_i=H$ since $G$ is LERF. On the other hand, $G$ has the CSP, so there is $n_i$ such at $\st_G(n_i)\leq O_i$ for each $i$. We now see that
\[
H\leq \bigcap_{i\in \Nb}H\st_G(n_i)\leq \bigcap_{i\in \Nb}O_i=H.
\]
It follows that $H=\bigcap_{k\in \Nb}H\st_G(k)$.

Conversely, assume that for all finitely generated $H\leq G$, we have $H=\bigcap_{n\geq 0}H\st_G(n)$. Since $H\st_G(n)$ is of finite index in $G$ for any $n$, $G$ is LERF. For $O\leq G$ with finite index, $O=\bigcap_{n \geq 0}O\st_G(n)$. Since $O$ is of finite index the sequence $(O\st_G(i))_{i\in \Nb}$ is eventually constant. We conclude that $O=O\st_G(n)$ for some $n$. Hence, $\st_G(n)\leq O$, and it follows that $G$ has the CSP. 
\end{proof}

Finally, for a group $G$, the \textbf{$FC$-center of $G$} is the set of all elements in $G$ whose conjugacy class is finite. Since the size of the conjugacy class of $gh$ is bounded by the product of the sizes of the conjugacy classes of $g$ and $h$, the $FC$-center forms a group. Since an element and its conjugate lie in the same conjugacy class, the $FC$-center is also normal.

A group is \textbf{just infinite} if it is an infinite group with every proper quotient finite.

\begin{lem}\label{lem:ji_fc} Let $K$ be a finitely generated just infinite group. If $K$ is not virtually abelian, then $K$ has a trivial $FC$-center.
\end{lem}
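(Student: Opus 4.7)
The plan is to exploit just infiniteness together with a classical theorem of B.H. Neumann on finitely generated $FC$-groups. Write $Z$ for the $FC$-center of $K$. As observed in the text, $Z$ is a (characteristic, hence) normal subgroup of $K$. Since $K$ is just infinite, every nontrivial normal subgroup has finite index, so the dichotomy is immediate: either $Z=1$, in which case we are done, or $[K:Z]<\infty$, which I will rule out.

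Assume for contradiction that $Z$ has finite index in $K$. Because $K$ is finitely generated, Schreier's lemma yields that $Z$ is finitely generated as well. By definition of the $FC$-center, every element of $Z$ has a finite $K$-conjugacy class, and hence a finite $Z$-conjugacy class. Therefore $Z$ is itself a finitely generated $FC$-group.

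Now I would invoke Neumann's theorem: a finitely generated $FC$-group is central-by-finite, i.e.\ its center has finite index. In particular $Z$ is virtually abelian. Since $[K:Z]<\infty$, it follows that $K$ is virtually abelian, contradicting the hypothesis of the lemma. Hence the remaining possibility $Z=1$ must hold.

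The only substantive input beyond the definitions is Neumann's theorem, which is genuinely necessary: without it one would have to separately argue that a finitely generated $FC$-group cannot contain, for instance, a nonabelian free subgroup or fail to be residually finite in a convenient way. Consequently, the main (and really the only) obstacle is citing or reproducing Neumann's result; everything else is a routine use of the just-infinite hypothesis to force the normal subgroup $Z$ to be either trivial or of finite index.
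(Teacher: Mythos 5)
Your proof is correct and takes essentially the same route as the paper: just infiniteness forces the $FC$-center to be trivial or of finite index, and in the finite-index case one gets a finitely generated $FC$-group which is center-by-finite, contradicting the assumption that $K$ is not virtually abelian. The only difference is that where you cite Neumann's theorem, the paper proves the needed special case directly in two lines (the center of a finitely generated $FC$-group is the intersection of the centralizers of the finitely many generators, each of finite index), so the external result you flag as the "only obstacle" is not actually needed.
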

\begin{proof}
Let $H$ be the $FC$-center for $K$. Since $K$ is just infinite, $H$ is either trivial or has finite index in $K$. 

Assume that $H$ is of finite index in $K$.  Since $K$ is finitely generated, so is $H$. Note that every element $g$ in $H$ has finitely many conjugates in $H$, so the centralizer, $C_H(g)$, has finite index in $H$. Choosing $S$ a finite generating set for $H$, the center of $H$ is 
\[Z(H)=\bigcap_{g\in S} C_H(g).\]

The center $Z(H)$ is thus a finite intersection of subgroups of finite index in $H$ and therefore has finite index in $H$. We conclude that $Z(H)$ is of finite index in $K$, hence $K$ is virtually abelian. 
\end{proof}

\section{Infra-direct products in branch groups}

Recall that for $x\in X^*$, $\psi_x$ is the section map given by $g\mapsto g_x$ and and for a leaf set $Y$, $\psi_Y:=\prod_{x\in Y} \psi_x$.

\begin{defn}
For $G$ a group and $Y$ a finite set, we say that $H\leq G^Y$ is an \textbf{infra-direct product} if $\psi_y(G)$ is of finite index in $G$ for all $y\in Y$.
\end{defn}

We begin with an easy lemma.

\begin{lem}\label{lem:finite_index_sgroup_infra}
Let $G$ be a non-virtually abelian just infinite group, $H\leq G^Y$ be an infra-direct product, and $U\subseteq Y$ be least such that $(\psi_U)\rest_H:H\rightarrow G^U$ is injective. For any finite index subgroup $L\leq H$, $U$ is least such that $(\psi_U)\rest_L:L\rightarrow G^U$ is injective.
\end{lem}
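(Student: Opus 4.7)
The plan is to verify two properties of $L$: (a) the restriction $(\psi_U)\rest_L$ is injective, and (b) for every $y\in U$, $(\psi_{U\setminus\{y\}})\rest_L$ is not injective. Part (a) is immediate from $L\leq H$. For (b), once non-injectivity is established for all singleton omissions, the case of an arbitrary proper subset $V\subsetneq U$ follows by picking $y\in U\setminus V$ and noting that $\ker((\psi_V)\rest_L)\supseteq\ker((\psi_{U\setminus\{y\}})\rest_L)$.

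To prove (b), fix $y\in U$ and set $N_y:=\ker((\psi_{U\setminus\{y\}})\rest_H)$. By minimality of $U$ for $H$, $N_y\neq 1$. The crux of the argument is to upgrade this to: $N_y$ is infinite. Granted this, $L\cap N_y$ has finite index in $N_y$ and so is nontrivial; any $1\neq h\in L\cap N_y$ lies in $\ker((\psi_{U\setminus\{y\}})\rest_L)$, giving the desired non-injectivity.

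To show $N_y$ is infinite, I will use the infra-direct product hypothesis together with the just infinite, non-virtually abelian assumption via the FC-center lemma (Lemma~\ref{lem:ji_fc}). Since $N_y$ is normal in $H$ and $\psi_y$ is a homomorphism on $G^Y$, the image $\psi_y(N_y)$ is normal in $\psi_y(H)$, and $\psi_y(H)$ has finite index in $G$. Suppose for contradiction $\psi_y(N_y)$ were finite. Then for any $g\in\psi_y(N_y)$, its $\psi_y(H)$-conjugacy class is contained in $\psi_y(N_y)$, hence finite. Writing $G=\bigsqcup_{i=1}^n a_i\psi_y(H)$ as a finite disjoint union of cosets, the full $G$-conjugacy class of $g$ is a union of at most $n$ translates of its $\psi_y(H)$-conjugacy class, hence finite. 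So $g$ lies in the FC-center of $G$, which is trivial by Lemma~\ref{lem:ji_fc}. Thus $\psi_y(N_y)=1$. But elements of $N_y$ already project trivially to $U\setminus\{y\}$, so combined with $\psi_y(N_y)=1$ we get $\psi_U(N_y)=1$; by injectivity of $(\psi_U)\rest_H$ this forces $N_y=1$, contradicting minimality of $U$. Hence $\psi_y(N_y)$, and therefore $N_y$, is infinite.

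The only delicate point is upgrading a finite $\psi_y(H)$-conjugacy class to a finite $G$-conjugacy class, and the finite coset decomposition above handles it; beyond that the proof is a clean assembly of the infra-direct product hypothesis, normality of $N_y$, and the triviality of the FC-center from Lemma~\ref{lem:ji_fc}.
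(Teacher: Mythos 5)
Your proof is correct, and it takes a somewhat different route from the paper's. The paper argues by contradiction: assuming $\psi_{U'}\rest_L$ is injective for some $U'\subsetneq U$, it passes to the normal core of $L$ in $H$, picks $h\neq 1$ in $\ker(\psi_{U'}\rest_H)$, and observes that $\psi_{U'}(hlh^{-1})=\psi_{U'}(l)$ together with injectivity on $L$ forces $h$ to centralize $L$; projecting to a coordinate where $\psi_y(h)\neq 1$ then exhibits a nontrivial element of the $FC$-center of $G$ (since $\psi_y(L)$ has finite index in $G$), contradicting Lemma~\ref{lem:ji_fc}. You instead establish the stronger intermediate fact that for each $y\in U$ the kernel $N_y=\ker(\psi_{U\setminus\{y\}}\rest_H)$ is infinite: if $\psi_y(N_y)$ were finite, its normality in the finite-index subgroup $\psi_y(H)$ would place it in the $FC$-center of $G$, hence make it trivial, and then $\psi_U(N_y)=1$ would force $N_y=1$ against the minimality of $U$ for $H$. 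Infiniteness of $N_y$ then gives $N_y\cap L\neq 1$ for every finite-index $L\leq H$, and your reduction of an arbitrary proper subset $V\subsetneq U$ to a singleton omission is fine. Both arguments rest on Lemma~\ref{lem:ji_fc} and on the finite-index images supplied by the infra-direct product hypothesis; yours avoids the normal-core/commutation trick and isolates a reusable statement (each kernel $\ker(\psi_{U\setminus\{y\}}\rest_H)$ is infinite, so it survives passage to any finite-index subgroup), while the paper's version is slightly shorter because it only needs a single kernel element. One shared caveat, not a gap on your part: Lemma~\ref{lem:ji_fc} is stated for finitely generated groups, a hypothesis not listed in the present lemma, but the paper's own proof uses it the same way and $G$ is finitely generated in all applications.
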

\begin{proof}
Suppose toward a contradiction there is $U'\subsetneq U$ such that $\psi_{U'}\rest_L$ is injective, and by passing to the normal core of $L$, we may assume that $L\normal H$. Since $\psi_{U'}\rest_H$ is not injective, there is $h\in H\setminus \{1\}$ such that $\psi_{U'}(h)=1$.  For each $l\in L$, $hlh^{-1}\in L$, and $\psi_{U'}(hlh^{-1})=\psi_{U'}(l)$. As $\psi_{U'}\rest_L$ is injective, we conclude that $h$ commutes with $L$. Finding $y\in Y$ such that $\psi_y(h)$ is non-trivial, the images $\psi_y(h)$ and $\psi_y(L)$ commute. This implies that $G$ has a non-trivial $FC$-center, which contradicts Lemma~\ref{lem:ji_fc}.
\end{proof}

\subsection{Lower leaf sets}

Let $G\leq \Aut(X^*)$ be a strongly self-replicating group and suppose additionally that $G$ has the CSP and is just infinite. Let $Y\subseteq X^*$ be a spanning leaf set and $H\leq G$ be such that $\rrist_H(Y)$ is an infra-direct product of $G^Y$. 

Enumerate $Y=\{y_i\}_{i=1}^n$. A  \textbf{system of lower leaf sets} $(Y_j)_{j=0}^n$ for $H$ and $Y$ is defined recursively as follows: $Y_0:=Y$. Having defined $Y_i$, we define 
\[
Y_{i+1}:=(Y_i\setminus \{y_{i+1}\}) \cup y_{i+1}Z_{i+1}
\]
where $Z_{i+1}$ is a level $X^n$ of $X^*$ such that $\st_G(Z_{i+1})\leq \psi_{y_{i+1}}(\rrist_H(Y_i))$. We may find such a $Z_{i+1}$ since $G$ has the CSP and  $\psi_{y_{i+1}}(\rrist_H(Y_i))$ is of finite index in $G$.

Let us collect a couple of observations about systems of lower leaf sets; the proofs are exercises in the definitions and so left to the reader. We note that claim (2) uses that $G$ is strongly self-replicating.
\begin{obs}\label{obs:lower_leaf_set}For $(Y_j)_{j=0}^n$ a system of lower leaf sets for $H$ and $Y=\{y_i\}_{i=1}^n$ as above, the following hold:
\begin{enumerate}
\item $y_jZ_j\subseteq Y_i$ for all $j\leq i$.
\item for each $z\in Z_i$, $\psi_{y_iz}(\st_H(Y_i))=G$.
\item If $L$ contains $H$, then $(Y_j)_{j=0}^n$ is a system of lower leaf sets for $L$ and $Y$.
\end{enumerate}
\end{obs}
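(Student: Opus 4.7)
The plan is to verify the three items directly from the recursive definition of the $Y_j$, exploiting that $Y$ is a leaf set for (1), that $G$ is strongly self-replicating for (2), and monotonicity of pointwise stabilizers for (3).

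For (1), I would induct on $i$, with $i=0$ vacuous. At stage $i\geq 1$, the case $j=i$ is immediate from the defining recursion $Y_i=(Y_{i-1}\setminus\{y_i\})\cup y_iZ_i$. For $j<i$ the inductive hypothesis supplies $y_jZ_j\subseteq Y_{i-1}$, and the only point to check is that removing $y_i$ causes no damage. If $y_i$ were an element of $y_jZ_j$, then $y_j$ would be a prefix of $y_i$, contradicting that $Y$ is a leaf set together with $j\neq i$. Hence $y_jZ_j\subseteq Y_{i-1}\setminus\{y_i\}\subseteq Y_i$.

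For (2), I plan a two-step lifting argument. Fix $g\in G$ and $z\in Z_i$. Since $Z_i$ is a full level $X^m$, strong self-replicating furnishes $k\in \st_G(Z_i)$ with $\psi_z(k)=g$ (the case $m=0$ is trivial since $\psi_\epsilon$ is the identity). The defining condition $\st_G(Z_i)\leq \psi_{y_i}(\st_H(Y_{i-1}))$ then lifts $k$ to some $h\in\st_H(Y_{i-1})$ with $\psi_{y_i}(h)=k$. This $h$ pointwise fixes $Y_{i-1}$, and because $\psi_{y_i}(h)=k$ pointwise fixes $Z_i$, the element $h$ also pointwise fixes every $y_iz'$ with $z'\in Z_i$. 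Combining this with item (1), $h$ pointwise fixes $Y_i$, so $h\in\st_H(Y_i)$; the chain rule $\psi_{y_iz}=\psi_z\circ \psi_{y_i}$ on the stabilizer of $y_i$ then gives $\psi_{y_iz}(h)=\psi_z(k)=g$.

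Finally, (3) is bookkeeping: $H\leq L$ gives $\st_H(Y_{i-1})\leq \st_L(Y_{i-1})$, so the inclusion $\st_G(Z_i)\leq \psi_{y_i}(\st_H(Y_{i-1}))$ that certifies $Z_i$ for $H$ immediately certifies the same $Z_i$ for $L$, and no new choices are required. The only substantive point in the whole observation lies in (2), where one must recognize that strong self-replicating is doing real work: it is needed precisely to produce a lift of $g$ that already stabilizes the newly-added sublevel $Z_i$, so that the subsequent lift $h\in\st_H(Y_{i-1})$ across $\psi_{y_i}$ in fact lies in $\st_H(Y_i)$.
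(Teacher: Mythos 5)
Your proof is correct: the paper leaves this observation as an exercise (noting only that claim (2) uses strong self-replication), and your verification is exactly the intended argument, correctly locating where strong self-replication does the work in (2). One tiny remark: the appeal to item (1) in your proof of (2) is superfluous, since the defining equality $Y_i=(Y_{i-1}\setminus\{y_i\})\cup y_iZ_i$ already shows that $h$ fixes $Y_i$ pointwise once it fixes $Y_{i-1}$ and $y_iZ_i$ pointwise.
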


The existence of lower leaf sets provides us with a key lemma, which gives insight into the structure of subgroups $L$ such that $\st_L(Y)$ is an infra-direct product of $G^Y$ for $Y$ some spanning leaf set.

\begin{lem}\label{lem:lower_leaf_set_key_lem}
For $G$, $H$, and $(Y_j)_{j=0}^n$ as above, there is a non-empty set $W\subseteq Y_n$ such that $\psi_W:\rrist_H(Y_n)\rightarrow G^W$ is injective with a finite index image.
\end{lem}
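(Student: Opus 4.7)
The plan is to realize $L_n := \rrist_H(Y_n)$ as an infra-direct product sitting inside $G^{Y_n}$ via the section map $\psi_{Y_n}$, select a suitable $W \subseteq Y_n$ by a minimality argument, and then leverage the trivial FC-center of $G$ to upgrade minimality to the finite-index image property.

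First I would show by induction on $i$ that $L_i := \rrist_H(Y_i)$ has finite index in $L_0 = \rrist_H(Y)$. By the recursive construction, $L_{i+1}$ is the preimage under $\psi_{y_{i+1}}$ of $\st_G(Z_{i+1})$ intersected with $L_i$. Since $\st_G(Z_{i+1})$ was chosen inside the finite-index subgroup $\psi_{y_{i+1}}(L_i)$ (finite index because $L_0$ is infra-direct and $L_i$ has finite index in $L_0$ by induction), one gets $L_{i+1}$ of finite index in $L_i$. Next I would observe that $\psi_{Y_n}\rest_{L_n}: L_n \to G^{Y_n}$ is injective: an element of $L_n$ fixes $Y_n$ pointwise, hence is determined by its sections at the vertices of $Y_n$, and since $Y_n$ is a spanning leaf set, trivial sections everywhere force the identity. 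Third, I would verify that $\psi_{Y_n}(L_n)$ is an infra-direct product of $G^{Y_n}$: for each $y = y_i z \in Y_n$ with $z \in Z_i$, Observation~\ref{obs:lower_leaf_set}(2) yields $\psi_y(L_i) = G$, and finite index of $L_n$ in $L_i$ then gives $\psi_y(L_n)$ of finite index in $G$.

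With those preparations in place, I would pick $W \subseteq Y_n$ to be minimal such that $\psi_W\rest_{L_n}$ is injective; such $W$ exists by the preceding paragraph and is nonempty since $L_n$ is nontrivial (its projections have finite index in the infinite group $G$). Set $\bar M := \psi_W(L_n) \leq G^W$. For each $w \in W$, the minimality of $W$ supplies a nontrivial kernel $N_w := \ker(\psi_{W \setminus \{w\}}\rest_{\bar M})$; its elements are supported only in the $w$-th coordinate, so $N_w$ identifies with a subgroup of $G$ that is normal in $\bar M$, and hence is normalized by the finite-index subgroup $\psi_w(\bar M) = \psi_w(L_n)$ of $G$.

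The main obstacle is then to upgrade this normalization to the assertion that each $N_w$ has finite index in $G$. The idea, mirroring the FC-center argument used in the proof of Lemma~\ref{lem:finite_index_sgroup_infra}, is that if $N_w$ had infinite index in $\psi_w(L_n)$, then the commutator structure of $\bar M$ restricted to the coordinate $w$ would produce a nontrivial element of $G$ centralizing a finite-index subgroup, contradicting the triviality of the FC-center of $G$ provided by Lemma~\ref{lem:ji_fc}. Once each $N_w$ is known to have finite index in $G$, the internal direct product $\prod_{w \in W} N_w$ sits inside $\bar M$ as a finite-index subgroup of $G^W$, giving the desired finite-index image and completing the proof.
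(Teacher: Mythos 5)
Your steps (1)--(4) are sound and essentially reproduce the paper's setup: the filtration $\rrist_H(Y_0)\geq\dots\geq\rrist_H(Y_n)$ has finite-index inclusions, $\psi_{Y_n}$ is injective on $\rrist_H(Y_n)$ because $Y_n$ is a spanning leaf set, the image is an infra-direct product, and a minimal $W$ exists with nontrivial coordinate kernels $N_w$. The gap is the final step, and it is fatal. Minimality hands you, at each $w\in W$, a nontrivial subgroup of $G$ that is \emph{normalized} by the finite-index subgroup $\psi_w(\rrist_H(Y_n))$; the FC-center argument of Lemma~\ref{lem:finite_index_sgroup_infra} requires an element \emph{centralized} by a finite-index subgroup, and nothing in your situation produces centralization. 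Worse, the abstract implication you are relying on (infra-direct product, $W$ minimal for injectivity, all coordinate kernels nontrivial $\Rightarrow$ finite-index image) is simply false for the groups at hand, because just infinite branch groups are not hereditarily just infinite. Concretely, for $G$ the Grigorchuk group with branching subgroup $K$, take $A:=\st_G(1)$ and $N:=(\psi_{X^1})^{-1}(K\times\{1\})$, a nontrivial normal subgroup of infinite index in $A$, and set $M:=\{(a,b)\in A\times A\mid aN=bN\}\leq G\times G$. Both projections of $M$ equal $A$ (finite index), both coordinate kernels equal $N\neq\{1\}$ (so the two-element coordinate set is minimal for injectivity), yet $M$ has infinite index in $G\times G$ and $N$ has infinite index in $G$. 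Since your argument uses only data that $M$ shares with $\psi_W(\rrist_H(Y_n))$, it cannot be completed as written.

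The ingredient you discarded is exactly what the lower-leaf-set construction exists to provide: by Observation~\ref{obs:lower_leaf_set}(2), at stage $j$ the projections of $\rrist_H(Y_{i_j})$ at the new vertices $y_{i_j}z$, $z\in Z_{i_j}$, are \emph{all of} $G$, not merely of finite index. The paper therefore builds the minimal set block by block along the filtration: at stage $j$, the kernel attached to a new vertex $w$ projects at $w$ to a nontrivial subgroup that is normal in the full group $G$, so just-infiniteness gives finite index there; only at the end are these kernels intersected with $\rrist_H(Y_n)$, where finite-index-ness of the images survives because the intersections have finite index in the kernels. Your step (3) weakens $\psi_{y_iz}(\rrist_H(Y_i))=G$ to ``finite index'' before selecting $W$, which throws away precisely the hypothesis that makes the normality-plus-just-infinite argument run; to repair the proof you must either carry out the selection of $W$ at the intermediate stages, as the paper does, or otherwise re-import that surjectivity into your final step.
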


\begin{proof}
Let $U\subseteq Y_0$ be least such that $\psi_U:\rrist_H(Y_0)\rightarrow G^U$ is injective and say $U=\{y_{i_1},\dots,y_{i_m}\}$; note that for any finite index subgroup of $\rrist_H(Y_0)$, the subset $U$ is also minimal such that the map $\psi_U$ is injective by Lemma~\ref{lem:finite_index_sgroup_infra}. We inductively build a sequence of pairs $(W_j, N_j)$ with $1\leq j\leq m$ such that the following hold:
\begin{enumerate}[(i)]
\item $W_j$ is a non-empty subset of $y_{i_j}Z_{i_j}$, where $y_{i_j}Z_{i_j}$ is as in the construction of the lower leaf sets,
\item $N_j\normal \rrist_H(Y_{i_j})$,
\item  $\psi_{W_j}(N_j)$ is of finite index in $G^{W_j}$, $\psi_{W_i}(N_j)=\{1\}$ for any $i<j$, and $\psi_{y_{i_l}}(N_j)=\{1\}$ for $l>j$, and
\item Setting $\Omega_j:=\bigcup_{i\leq j}W_i\cup \{y_{i_l}\}_{l>j}$, the map $\psi_{\Omega_j}:\rrist_H(Y_{i_j})\rightarrow G^{\Omega_j}$ is injective, and $\Omega_j\subseteq Y_{i_j}$ is minimal such that $\psi_{\Omega_j}$ is injective.\end{enumerate}

For the base case, let $W_1\subseteq y_{i_1}Z_{i_1}$ be least such that $\psi_{\Omega_1}:\rrist_H(Y_{i_1})\rightarrow G^{\Omega_1}$ is injective, where $\Omega_1=W_1\cup\{y_{i_l}\}_{l>1}$. The subgroup 
$\rrist_H(Y_{i_1})$ is of finite index in $\rrist_H(Y_0)$, so $U$ must be a minimal subset of $Y_0$ such that $\psi_U$ restricted to $\rrist_H(Y_{i_1})$ is injective, by Lemma~\ref{lem:finite_index_sgroup_infra}. Hence, $W_1$ is non-empty.  Condition (i)  is clearly satisfied, and  condition (iv) follows from our choice of $U$ and  Lemma~\ref{lem:finite_index_sgroup_infra}.

For each $w\in W_1$, the map
\[
\psi_{\Omega_1\setminus\{w\}}:\rrist_H(Y_{i_1})\rightarrow G^{\Omega_1\setminus \{w\}}
\]
fails to be injective. Let $N_w$ be the kernel. In view of Observation~\ref{obs:lower_leaf_set}, $\psi_w(\rrist_H(Y_{i_1}))=G$. Therefore, $\psi_w(N_w)$ is of finite index in $G$, as $G$ is just infinite. Setting $N_1:=\grp{N_w\mid w\in W_1}$, it now follows that $\psi_{W_1}(N_1)$ is of finite index in $G^{W_1}$. Furthermore, $ \psi_{y_{i_l}}(N_1)=\{1\}$ for $l>1$. Hence, (ii) and (iii) hold.

Suppose that we have built our sequence up to $(W_j,N_j)$. By recursion, $\psi_{\Omega_j}:\rrist_H(Y_{i_j})\rightarrow G^{\Omega_j}$ is injective and $\Omega_j$ is minimal for which the projection is injective, where
\[
\Omega_{j}:=\bigcup_{i\leq j}W_{i}\cup \{y_{i_l}\}_{l>j}.
\]
 Lemma~\ref{lem:finite_index_sgroup_infra} ensures $\Omega_j$ must be a minimal subset of $Y_{i_j}$ such that $\psi_{\Omega_j}$ restricted to $\rrist_H(Y_{i_{j+1}})$ is injective. Let $W_{j+1}\subseteq y_{i_{j+1}}Z_{i_{j+1}}$ be least such that $\psi_{\Omega_{j+1}}:\rrist_H(Y_{i_{j+1}})\rightarrow G^{\Omega_{j+1}}$ is injective, where 
\[
\Omega_{j+1}:=\bigcup_{i\leq j+1}W_{i}\cup \{y_{i_l}\}_{l>j+1}.
\]
As in the base case, Lemma~\ref{lem:finite_index_sgroup_infra} ensures that $W_{j+1}$ is non-empty and $\Omega_{j+1}$ is minimal such that $\psi_{\Omega_{j+1}}$ is injective.  Conditions (i) and (iv) are thus satisfied. 

For each $w\in W_{j+1}$, the map
\[
\psi_{\Omega_{j+1}\setminus\{w\}}:\rrist_H(Y_{i_{j+1}})\rightarrow G^{\Omega_{j+1}\setminus \{w\}}
\]
fails to be injective. Letting $N_w$ be the kernel and setting $N_{j+1}:=\grp{N_w\mid w\in W_{j+1}}$, it follows as in the base case that $\psi_{W_{j+1}}(N_{j+1})$ is of finite index in $G^{W_{j+1}}$,  $\psi_{W_i}(N_{j+1})=\{1\}$ for any $i<j+1$, and $\psi_{y_{i_l}}(N_{j+1})=\{1\}$ for $l>j+1$. Hence, (ii) and (iii) hold, and our construction is complete.

Set $W:=\bigcup_{j=1}^{m}W_j$. We see that $W=\Omega_m$, so $\psi_W:\rrist_H(Y_{i_m})\rightarrow G^W$ is injective. Lemma~\ref{lem:finite_index_sgroup_infra} implies that $\psi_W:\rrist_H(Y_n)\rightarrow G^W$ is also injective. Set  $M_j:=N_j\cap \rrist_H(Y_n)$. For each $j$, $\psi_{W_l}(M_j)=\{1\}$ for $l\neq j$. Indeed, our recursive construction ensures that $\psi_{W_l}(M_j)=\{1\}$ for $l<j$. For $l>j$, $\psi_{y_{l}}(N_j)=\{1\}$. Since $\psi_{W_l}(M_j)= \psi_Z\circ  \psi_{y_l}(M_j )$ for some $Z$ such that $y_lZ=W_l$, we conclude that $\psi_{W_l}(M_j)=\{1\}$.  On the other hand, $M_j$ is a finite index subgroup of $N_j$, so $\psi_{W_j}(M_j)$ is of finite index in $G^{W_j}$. It now follows that $\psi_{W}(\grp{M_j|1\leq j\leq m})$ is of finite index in $G^W$. Hence, $\psi_W(\rrist_H(Y_n))$ is of finite index in $G^W$.
\end{proof}

\subsection{Structure results}

We now deduce several consequences of Lemma~\ref{lem:lower_leaf_set_key_lem}, which will later be used to analyze the Chabauty space.

\begin{lem}\label{lem:subdirect_fg}
Suppose that $G\leq \Aut(X^*)$ is finitely generated and just infinite, is strongly self-replicating, and has the CSP. For $Y$ a spanning leaf set and $L\leq G$, if $\st_L(Y)\leq G^Y$ is an infra-direct product, then  $L$ is finitely generated. 
\end{lem}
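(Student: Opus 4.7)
The plan is to reduce the statement directly to Lemma~\ref{lem:lower_leaf_set_key_lem}. The hypotheses on $G$ together with the assumption that $\st_L(Y)\leq G^Y$ is an infra-direct product are exactly what is needed to construct a system of lower leaf sets $(Y_j)_{j=0}^{n}$ for $L$ and $Y$, and then to apply the key lemma with $H=L$. This produces a nonempty set $W\subseteq Y_n$ such that $\psi_W:\st_L(Y_n)\to G^W$ is injective with image of finite index in $G^W$.

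From here the argument is short. Since $G$ is finitely generated and $W$ is finite, the direct power $G^W$ is finitely generated, so every finite-index subgroup of $G^W$ is finitely generated. In particular $\psi_W(\st_L(Y_n))$ is finitely generated, and by injectivity of $\psi_W$ on $\st_L(Y_n)$ the group $\st_L(Y_n)$ is itself finitely generated. To finish, I would transfer finite generation from $\st_L(Y_n)$ up to $L$ by showing that $\st_L(Y_n)$ has finite index in $L$: letting $N$ denote the depth of the spanning leaf set $Y_n$, every vertex of $Y_n$ has length at most $N$, whence $\st_L(N)\leq \st_L(Y_n)$, and since $X^N$ is finite, $\st_L(N)$, and a fortiori $\st_L(Y_n)$, has finite index in $L$. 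A group containing a finitely generated subgroup of finite index is itself finitely generated, so $L$ is finitely generated.

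The main obstacle is essentially nonexistent: all of the substantive work has already been packaged inside Lemma~\ref{lem:lower_leaf_set_key_lem}, and the result above is best viewed as its natural corollary, extracting the finite-generation consequence from the more technical injectivity-with-finite-index-image conclusion. The only minor verification is that passing to a system of lower leaf sets replaces $Y$ by a strictly deeper spanning leaf set $Y_n$, which is still a pointwise stabilizer of finite index in $L$ via the level stabilizer $\st_L(N)$.
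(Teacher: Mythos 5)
Your proposal is correct and follows the paper's own argument: apply Lemma~\ref{lem:lower_leaf_set_key_lem} to a system of lower leaf sets for $L$ and $Y$, deduce that $\st_L(Y_n)$ is finitely generated from the injective finite-index embedding into $G^W$, and transfer finite generation to $L$ since $\st_L(Y_n)$ has finite index in $L$. The extra details you supply (finite-index subgroups of $G^W$ are finitely generated, and $\st_L(N)\leq\st_L(Y_n)$ gives the finite index) are exactly the steps the paper leaves implicit.
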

\begin{proof}
Letting $(Y_j)_{j=0}^n$ be a system of lower leaf sets for $L$ and $Y$, Lemma~\ref{lem:lower_leaf_set_key_lem} supplies a non-empty $W\subseteq Y_n$ such that $\psi_W:\rrist_L(Y_n)\rightarrow G^W$ is injective with a finite index image. Hence, $\rrist_L(Y_n)$ is finitely generated, and as $\rrist_L(Y_n)$ is of finite index in $L$, $L$ is finitely generated.
\end{proof}

\begin{lem}\label{lem:comm_branch}
Suppose that $G\leq \Aut(X^*)$ is finitely generated and just infinite, is strongly self-replicating, and has the CSP. For $Y$ some spanning leaf set and $L\leq G$, if $\st_L(Y)\leq G^Y$ is an infra-direct product and separable in $G$, then $|\comm_G(L):L|<\infty$.
\end{lem}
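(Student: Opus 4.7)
The plan is to apply Theorem~\ref{thm:separable1} to the pair $(C,L)$ with $C=\comm_G(L)$, after first establishing that $C$ has finite index in $G$, and then to close via just-infiniteness of $G$.

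First I would apply Lemma~\ref{lem:lower_leaf_set_key_lem} to obtain a system of lower leaf sets $(Y_j)_{j=0}^n$ for $L$ and $Y$ together with a non-empty $W\subseteq Y_n$ such that $\psi_W\colon\rrist_L(Y_n)\to G^W$ is injective with finite-index image. Unwinding, $\rrist_L(Y_n)=\prod_{w\in W}\rrist_L(w)$ with each $\rrist_L(w)$ of finite index in $\rist_G(w)$, so $\rrist_L(Y_n)$ is of finite index in $\rrist_G(W)$; since $\rrist_L(Y_n)$ is of finite index in $L$ (as in the proof of Lemma~\ref{lem:subdirect_fg}), $C=\comm_G(\rrist_L(Y_n))$. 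For any $g\in \st_G(W)$, the element $g$ fixes each $w\in W$, so it normalizes every $\rist_G(w)$ and hence $\rrist_G(W)$; conjugation by $g$ therefore sends $\rrist_L(Y_n)$ to another finite-index subgroup of $\rrist_G(W)$, and the two are commensurable in $G$. Hence $\st_G(W)\leq C$, and the finiteness of $W$ gives $[G:\st_G(W)]<\infty$, so $[G:C]<\infty$.

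Next I would apply Theorem~\ref{thm:separable1} in the ambient group $C$: the subgroup $L$ is separable in $C$ (intersect the defining finite-index subgroups of $G$ with $C$), commensurated in $C$ by definition of the commensurator, and $C$ is finitely generated as a finite-index subgroup of the finitely generated group $G$, hence generated by finitely many cosets of $L$. The theorem then produces $N\leq L$ with $[L:N]<\infty$, $N\trianglelefteq C$, and $N$ separable in $C$.

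The main obstacle is the final step: showing $[C:N]<\infty$, equivalently that $N$ has finite index in $G$. My idea is to pass to the normal closure $\widetilde N=\langle gNg^{-1}\mid g\in G\rangle$, which is finitely generated (only finitely many distinct $G$-conjugates of $N$ appear, since $N\trianglelefteq C$ and $[G:C]<\infty$, and $N$ is finitely generated because $L$ is, by Lemma~\ref{lem:subdirect_fg}) and non-trivial (as $N\cap\rrist_L(Y_n)$ is of finite index in the non-trivial group $\rrist_L(Y_n)$). Normality in the just-infinite group $G$ forces $\widetilde N$ to be of finite index in $G$. The delicate remaining piece is closing the gap between $\widetilde N$ and $N$, which I expect to follow from a tree-theoretic analysis of how each conjugate $g_iNg_i^{-1}$ sits in $\rrist_G(g_i(W))$, combined with the separability of $N$ in $C$ and the CSP of $G$.
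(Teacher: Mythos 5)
Your opening reduction contains the decisive error, and it propagates through everything that follows. Lemma~\ref{lem:lower_leaf_set_key_lem} only says that the section map $\psi_W$ restricted to $\st_L(Y_n)$ is injective with image of finite index in $G^W$; it does \emph{not} say that $\st_L(Y_n)$ decomposes as $\prod_{w\in W}\rist_L(w)$ with each factor of finite index in $\rist_G(w)$, nor that $\st_L(Y_n)$ has finite index in $\rist_G(W)$. (Note also that $\st_L(Y_n)$ is the pointwise stabilizer of $Y_n$ in $L$, not a rigid stabilizer, and its elements may have non-trivial sections at the vertices of $Y_n\setminus W$.) Such a subgroup typically sits inside $G^{Y_n}$ as a twisted, diagonal-type subgroup of infinite index in $\rist_G(W)$, possibly with all $\rist_L(w)$ trivial. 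Concretely, take $H=\grp{f_k\mid k\in K}$ the near-diagonal subgroup from Corollary~\ref{cor:CB-rank_GN-alt}: here one may take $W=\{w\}$ a single vertex of $Y$, yet $\st_G(w)\not\leq\comm_G(H)$, since for $g\in K^{\{y\}}$ with $y\in Y\setminus\{w\}$ and $g_y=k_0\neq 1$, the intersection $gHg^{-1}\cap H$ corresponds to elements of $K$ centralizing $k_0$, and $C_K(k_0)$ has infinite index in $K$ because the FC-center of $G$ is trivial (Lemma~\ref{lem:ji_fc}). So your intermediate claim $[G:\comm_G(L)]<\infty$ is not just unproved; it is false in exactly the cases the lemma is designed for: if it held together with the lemma's conclusion $|\comm_G(L):L|<\infty$, then $L$ would have finite index in $G$, whereas the lemma is applied to subgroups of infinite index (positive depth).

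The later steps inherit this defect: you use $[G:C]<\infty$ to make $C$ finitely generated so that Theorem~\ref{thm:separable1} applies with ambient group $C$, to bound the number of $G$-conjugates of $N$, and finally you reformulate the missing step as ``show $N$ has finite index in $G$,'' which is false whenever $L$ has infinite index; so the admitted ``delicate remaining piece'' is aimed at an impossible target. The workable route (the paper's) never leaves the commensurator: set $J:=\comm_G(L)$; since $\st_J(Y)\geq\st_L(Y)$, the group $\st_J(Y)$ is again an infra-direct product, hence finitely generated by Lemma~\ref{lem:subdirect_fg}; apply Theorem~\ref{thm:separable1} \emph{inside} $\st_J(Y)$ to get $\wt{L}\normal\st_J(Y)$ of finite index in $L$; then, if $L$ had infinite index in $\st_J(Y)$, the kernel $I$ of $\psi_W$ on $\st_J(Y)$ would be non-trivial, would meet $L$ trivially (so $I$ and $\wt{L}$ commute), and projecting to a coordinate where $I$ is non-trivial would place a non-trivial element in the FC-center of $G$, contradicting Lemma~\ref{lem:ji_fc}. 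Hence $|J:L|<\infty$ without ever comparing $J$ to $G$.
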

\begin{proof} 
Applying Lemma~\ref{lem:lower_leaf_set_key_lem}, we may find a spanning leaf set $Y'$ and a non-empty $W\subseteq Y'$ such that $\psi_{W}:\rrist_L(Y')\rightarrow G^{W}$ has a finite index image. Note that $\comm_G(\rrist_L(Y'))=\comm_G(L)$ since $\rrist_L(Y')$ is of finite index in $L$. Replacing $L$ with $\rrist_L(Y')$ and $Y$ with $Y'$, we may assume that there is $W\subseteq Y$ such that $\psi_W:L\rightarrow G^W$ is an injective homomorphism with a finite index image.

Put $J:=\comm_G(L)$. The stabilizer $\rrist_J(Y)$ is an infra-direct product of $G^Y$, so by Lemma~\ref{lem:subdirect_fg}, $\rrist_J(Y)$ is finitely generated.  Theorem~\ref{thm:separable1} now supplies $\tilde{L} \normal \rrist_J(Y)$ such that $\tilde{L}$ is a finite index subgroup of $L$.  Suppose toward a contradiction that $L$ is of infinite index in $\rrist_J(Y)$.  The map 
\[
(\psi_W)\rest_{\rrist_J(Y)}:\rrist_J(Y)\rightarrow G^W
\]
must be non-injective, so $(\psi_W)\rest_{\rrist_J(Y)}$ has a non-trivial kernel $I$.  The subgroup $I$ is normal in $\rrist_J(Y)$ and intersects $L$ trivially, hence $I$ and $\wt{L}$ commute. Fix some coordinate $y\in Y$ such that $\psi_y(I)$ is non-trivial. The projection $\psi_y(\wt{L})$ is then a finite index subgroup of $G$ that centralizes $\psi_y(I)$. The non-trivial group $\psi_y(I)$ is thus contained in the FC-center of $G$ which is impossible in view of Lemma~\ref{lem:ji_fc}.  We conclude that $L$ is of finite index in $\rrist_J(Y)$. Therefore, $|J:L|<\infty$, since $|J:\rrist_J(Y)|<\infty$.
\end{proof}

\begin{cor}\label{cor:fin_extensions_branch}
Suppose that $G\leq \Aut(X^*)$ is finitely generated and just infinite, is strongly self-replicating, and has the CSP. For $Y$ some spanning leaf set and $L\leq G$, if $\st_L(Y)\leq G^Y$ is an infra-direct product and separable in $G$, then there are only finitely many subgroups $H\leq G$ such that $L\leq H\leq G$ and $|H:L|<\infty$.
\end{cor}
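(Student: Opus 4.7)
The plan is to leverage the preceding Lemma~\ref{lem:comm_branch} almost directly: show that every finite-index overgroup of $L$ in $G$ must lie inside $\comm_G(L)$, and then use that $|\comm_G(L):L|<\infty$ to conclude there are only finitely many subgroups sandwiched between $L$ and $\comm_G(L)$.

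First I would verify the inclusion $H\leq \comm_G(L)$ whenever $L\leq H\leq G$ with $|H:L|<\infty$. For any $h\in H$, the conjugate $hLh^{-1}$ is again a subgroup of $H$ of index $|H:L|$, so both $L$ and $hLh^{-1}$ are finite-index subgroups of $H$. Their intersection $L\cap hLh^{-1}$ therefore has finite index in each, which is precisely the definition of $h$ commensurating $L$. Hence $h\in \comm_G(L)$, and $H\leq \comm_G(L)$ as claimed.

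Next, Lemma~\ref{lem:comm_branch} gives $|\comm_G(L):L|<\infty$ under the hypotheses at hand. The number of subgroups sitting between $L$ and $\comm_G(L)$ is then finite: any such subgroup is a union of (finitely many) left cosets of $L$ in $\comm_G(L)$, so a crude bound is $2^{|\comm_G(L):L|}$. Since every finite-index overgroup $H$ of $L$ in $G$ is one of these intermediate subgroups, there can be only finitely many of them, completing the proof.

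There is no real obstacle here once Lemma~\ref{lem:comm_branch} is available; the corollary is essentially a packaging statement. The only thing to be a bit careful about is that $L$ need not be normal in $\comm_G(L)$, so one should phrase the finiteness as counting subgroups of a finite-index overgroup rather than appealing to the correspondence theorem.
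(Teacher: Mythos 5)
Your argument is correct and is essentially the paper's own proof: the paper likewise notes that every finite-index overgroup $H$ of $L$ satisfies $L\leq H\leq \comm_G(L)$ and then applies Lemma~\ref{lem:comm_branch}. Your extra details (verifying the inclusion via conjugates and bounding intermediate subgroups by unions of cosets) simply spell out what the paper leaves implicit.
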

\begin{proof} 
Every such subgroup $H$ is such that $L\leq H\leq \comm_G(L)$. Lemma~\ref{lem:comm_branch} ensures that $|\comm_G(L):L|<\infty$, so there are finitely many such $H$. 
\end{proof}

\begin{defn} 
For $L\leq G$, the \textbf{depth} of $L$ in $G$ is the supremum of the natural numbers $n$ for which there is a series of subgroups $G=H_0>H_1>\dots>H_n=L$ such that $|H_i/H_{i+1}|=\infty$ for all $i$. Depth zero subgroups are of finite index. We denote the depth by $\depth_G(L)$. When the ambient group $G$ is clear from context, we write $\depth(L)$.
\end{defn}

\begin{lem}\label{lem:finite_depth}
Suppose that $G\leq \Aut(X^*)$ is finitely generated and just infinite, is strongly self-replicating, and has the CSP. For $Y$ some spanning leaf set and $L\leq G$, if $\st_L(Y)\leq G^Y$ is an infra-direct product, then $\depth(L)<\infty$. 
\end{lem}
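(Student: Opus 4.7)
The plan is to apply Lemma~\ref{lem:lower_leaf_set_key_lem} to $L$ and $Y$, producing a system of lower leaf sets with terminal spanning leaf set $Y'$ and a non-empty $W\subseteq Y'$ for which $\psi_W\colon\rrist_L(Y')\to G^W$ is injective with image of finite index $N:=|G^W:\psi_W(\rrist_L(Y'))|$ in $G^W$. Injectivity of $\psi_W$ on the internal direct product $\rrist_L(Y')=\prod_{y\in Y'}\rist_L(y)$ forces $\rrist_L(Y'\setminus W)=\{1\}$, so $\rrist_L(Y')=\prod_{w\in W}\rist_L(w)$, and each $\rist_L(w)$ embeds via $\psi_w$ as a finite-index subgroup of $G$.

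Next, I argue by contradiction. Suppose $\depth(L)=\infty$, so that for every $k$ there exists a chain $G=H_0>H_1>\dots>H_k=L$ with $|H_{i-1}:H_i|=\infty$ at each step. For each $i$, the containment $L\leq H_i$ gives $\rrist_L(W)\leq\rrist_{H_i}(W)=\prod_{w\in W}\rist_{H_i}(w)$, and $\psi_W$ remains injective on $\rrist_{H_i}(W)$ since each $\psi_w$ is injective on the full rigid stabilizer $\rist_{\Aut(X^*)}(w)$. Consequently $\psi_W(\rrist_{H_i}(W))$ is a finite-index subgroup of $G^W$ containing the fixed finite-index subgroup $\psi_W(\rrist_L(W))$. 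There are at most $2^N$ such subgroups of $G^W$, so the sequence $\rrist_{H_i}(W)$ takes only finitely many distinct values, and for $k$ large enough there are consecutive pairs $H_{i+1}<H_i$ with $\rrist_{H_i}(W)=\rrist_{H_{i+1}}(W)$ while $|H_i:H_{i+1}|=\infty$.

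For such a pair, by the product decomposition $\rrist_{H_i}(Y')=\rrist_{H_i}(W)\times\rrist_{H_i}(Y'\setminus W)$ the infinite index must be supported either in $\rrist_{H_i}(Y'\setminus W)$ or in $H_i$ modulo $\rrist_{H_i}(Y')$. The main obstacle is ruling out these two residual sources of growth. My approach is to iterate the lower-leaf-set construction at the coordinates in $Y'\setminus W$: by Observation~\ref{obs:lower_leaf_set}(2), the section $\psi_{y'}(\st_{H_i}(Y'))$ equals $G$ at each $y'\in Y'\setminus W$ of the form $y_jz$, which lets us further refine $Y'$ there and reapply Lemma~\ref{lem:lower_leaf_set_key_lem} after passing to the finite-index subgroup $\st_{H_i}(Y')\leq H_i$. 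An induction on $|Y'\setminus W|$, with the base case $W=Y'$ handled by the argument above, then yields a uniform bound on $k$, contradicting $\depth(L)=\infty$ and establishing $\depth(L)<\infty$.
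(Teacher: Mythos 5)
There is a genuine gap, and it stems from two related problems. First, you conflate the pointwise stabilizer with the rigid stabilizer: in this paper $\rrist_H(Y)$ denotes $\st_H(Y)$, the pointwise stabilizer of $Y$ in $H$, so the identification $\rrist_L(Y')=\prod_{y\in Y'}\rist_L(y)$ is not valid, injectivity of $\psi_W$ on $\st_L(Y')$ does not force anything like $\rrist_L(Y'\setminus W)=\{1\}$, and the group in question is an infra-direct (roughly subdirect) product, not an internal direct product of rigid vertex stabilizers. Second, and more seriously, the step ``$\psi_W$ remains injective on $\rrist_{H_i}(W)$'' is unjustified and is exactly the crux of the matter: $W$ is a non-spanning leaf set, so $\psi_W$ need not be injective on the stabilizer inside a larger group, and Lemma~\ref{lem:finite_index_sgroup_infra} only guarantees stability of the minimal injective coordinate set under passage to \emph{finite-index} subgroups. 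When you go up from $L$ to an overgroup $H_i$ of infinite index over $L$, the minimal set $W$ can genuinely change, so the fixed $W$ obtained from $L$ gives you no control over the chain. Your closing paragraph essentially concedes this: the proposed iteration of the lower-leaf-set construction at the coordinates of $Y'\setminus W$ and the induction on $|Y'\setminus W|$ are announced but not carried out, and no uniform bound on $k$ is actually produced.

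The paper's proof sidesteps the difficulty you ran into by reversing the quantifiers: fix one system of lower leaf sets $(Y_j)_{j=0}^n$ for $L$ and $Y$, note by Observation~\ref{obs:lower_leaf_set}(3) that it is simultaneously a system of lower leaf sets for every term of a chain $G=H_0>H_1>\dots>H_m=L$ (each $H_i$ contains $L$), and apply Lemma~\ref{lem:lower_leaf_set_key_lem} to each $H_i$ \emph{separately}, obtaining a possibly different non-empty $W_i\subseteq Y_n$ with $\psi_{W_i}$ injective on $\rrist_{H_i}(Y_n)$ with finite-index image. Taking $m=2^{|Y_n|}$, the pigeonhole principle yields $i<j$ with $W_i=W_j$; then injectivity of $\psi_{W_i}$ on the larger group $\rrist_{H_i}(Y_n)$ together with the finite-index image of $\rrist_{H_j}(Y_n)$ forces $|H_i:H_j|<\infty$, so no chain of length $2^{|Y_n|}$ can have all indices infinite and $\depth(L)\leq 2^{|Y_n|}$. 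If you want to salvage your argument, this is the missing idea: do not fix the injectivity set coming from $L$, but let it vary along the chain and pigeonhole over the subsets of the terminal leaf set $Y_n$.
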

\begin{proof}
Let $(Y_j)_{j=0}^n$ be a system of lower leaf sets for $L$ and $Y$ and put $m=2^{|Y_n|}$, which is the size of the power set of $Y_n$. Let 
\[
G=H_0>H_1>\dots>H_m=L
\]
be a sequence of subgroups. By Observation~\ref{obs:lower_leaf_set}, $(Y_j)_{j=0}^n$ is a system of lower leaf sets for each $H_i$. For each $i$, Lemma~\ref{lem:lower_leaf_set_key_lem} supplies a non-empty $W_i\subseteq Y_n$ such that $\psi_{W_i}:\rrist_{H_i}(Y_n)\rightarrow G^{W_i}$ is injective with a finite index image. Since $m+1$ is larger than the size of the power set of $Y_n$, there are $H_i$ and $H_j$ with $i<j$ such that $W_i=W_j$. Hence, $|H_j:H_i|<\infty$. We conclude that $\depth(L)\leq m$.
\end{proof}

\section{On the Grigorchuk--Nagnibeda alternative}

Our main theorem will consider groups which satisfy the following alternative.
\begin{defn} 
A group $G\leq \Aut(X^*)$ is said to obey the \textbf{Grigorchuk--Nagnibeda alternative} if for every finitely generated subgroup $H\leq G$, either
\begin{enumerate}[(a)]
\item there is $y\in X^*$ such that $\psi_y(H)$ is finite, or
\item there is a spanning subtree $Y\subseteq X^*$ such that $\rrist_H(Y)$ is an infra-direct product of $G^Y$.
\end{enumerate}
\end{defn}

Towards establishing our main theorem, we here examine the groups described in the cases of the alternative.

\subsection{Subgroups with a finite section group: Case (a)}

For a leaf set $T\subseteq X^*$ and $n$ greater that or equal to the depth of $T$, the \textbf{shadow} of $T$ on level $n$ is
\[
S(T,n):=\{v\in X^n\mid \exists t\in T\;t\sqsubseteq v\}.
\]
By the choice of $n$, the shadow is always a non-empty leaf set.

\begin{lem}\label{lem:shadow}
Let $G\leq \Aut(X^*)$ and $H\leq G$. If $
T:=\{v\in X^k\mid \psi_v(H) \text{ is finite}\} $  is non-empty, then $S(T,n)$ is setwise invariant under the action of $H$ on $X^n$ for all $n\geq k$
\end{lem}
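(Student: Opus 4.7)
The plan is to reduce the shadow statement to showing that $T$ itself is setwise $H$-invariant as a subset of $X^k$, and then to derive the shadow statement by a direct computation. Once $T$ is known to be $H$-invariant, the shadow piece is routine: every $v\in S(T,n)$ has the form $v = tw$ with $t\in T$ and $w\in X^{n-k}$, and for $h\in H$ we compute $h(v) = h(tw) = h(t)\,h_t(w)$. Because automorphisms preserve levels, $h(t)\in X^k$, and by the $H$-invariance of $T$ we will have $h(t)\in T$. Hence $h(t)\sqsubseteq h(v)$ with $h(t)\in T$, showing $h(v)\in S(T,n)$.

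The heart of the argument is thus the claim that $v\in T$ implies $h(v)\in T$ for every $h\in H$. My tool is the cocycle identity for sections, $(gh)_v = g_{h(v)}\,h_v$, which one verifies directly from
\[
(gh)(vw) = g\bigl(h(v)\,h_v(w)\bigr) = g(h(v))\,g_{h(v)}(h_v(w)).
\]
Setting $u = h(v)$ and rearranging gives $g_u = (gh)_v\,h_v^{-1}$ for every $g\in\Aut(X^*)$. Specialising to $g\in H$ and using that $g\mapsto gh$ is a bijection of $H$ onto itself, we get the set equality
\[
\psi_u(H) = \psi_v(H)\,h_v^{-1}.
\]
Since right translation by $h_v^{-1}$ is a bijection of $\Aut(X^*)$, $\psi_u(H)$ and $\psi_v(H)$ have the same cardinality; in particular, $\psi_u(H)$ is finite precisely when $\psi_v(H)$ is. This proves that $T$ is $H$-invariant.

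I do not anticipate any real obstacle here; the only technicalities are getting the side of the cocycle right and remembering to use that $H$ (not merely a set) is a group, which is what allows the substitution $g\mapsto gh$ to sweep out all of $\psi_v(H)$. The hypothesis $n\ge k$ only serves to ensure that $S(T,n)$ is well defined as a leaf set, and the non-emptiness of $T$ is not logically needed for invariance but is assumed so that $S(T,n)$ is itself non-empty and meaningful.
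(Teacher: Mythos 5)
Your proof is correct, and it reaches the same reduction as the paper --- everything hinges on showing $T$ itself is $H$-invariant, after which the shadow statement follows from $h(tw)=h(t)h_t(w)$ exactly as you say (to get setwise equality rather than just $h(S(T,n))\subseteq S(T,n)$, note that $h$ permutes the finite set $X^n$, or run the same argument for $h^{-1}$). The mechanism you use for the key step is, however, a little different from the paper's. The paper passes to the vertex stabilizer: it uses that $\st_H(h(v))$ has finite index in $H$ and the conjugation identity $\psi_{h(v)}(\st_H(h(v)))=h_v\,\psi_v(\st_H(v))\,h_v^{-1}$, so finiteness of $\psi_v(H)$ gives finiteness of $\psi_{h(v)}(\st_H(h(v)))$ and then of $\psi_{h(v)}(H)$ via the finite-index step (which is left implicit there and itself needs a coset-plus-cocycle computation). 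You instead apply the cocycle identity $(gh)_v=g_{h(v)}h_v$ to all of $H$ at once and use that $g\mapsto gh$ permutes $H$, obtaining the exact translation formula $\psi_{h(v)}(H)=\psi_v(H)\,h_v^{-1}$; this gives equality of cardinalities of the full section sets directly, with no detour through stabilizers or finite index. Your route is slightly more economical and makes the ``suffices'' step of the paper unnecessary; the paper's version has the mild advantage of exhibiting $\psi_{h(v)}(\st_H(h(v)))$ as a genuine conjugate subgroup rather than a translate of a section set, but for the finiteness claim either identity does the job.
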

\begin{proof}
The claim is immediate once we establish that $T$ is setwise invariant under the action of $H$. For $v\in T$ and $h\in H$, it suffices to show that $\psi_{h(v)}(st_H(h(v)))$ is finite, since $\st_H(h(v))$ is of finite index in $H$. We see that
\[
\psi_{h(v)}(\st_H(h(v)))=(h_v)\psi_v(\st_H(v))(h_v)^{-1},
\]
and in particular, $|\psi_{h(v)}(\st_H(h(v)))|=|(h_v)\psi_v(\st_H(v))(h_v)^{-1}|<\infty$. Thus, $h(v)\in T$, and $T$ is setwise invariant under the action of $H$. 
\end{proof}

\begin{lem}\label{lem:indp_fam}
Let $G\leq \Aut(X^*)$ and $H\leq G$. If there is $v\in X^*$ such that $\psi_v(H)$ is finite, then there is a countable independent family $(Y_i)_{i\in \Nb}$ of $H$-invariant leaf sets such that $\psi_v(H)$ is finite for all $v\in Y_i$ and $i\in \Nb$. Furthermore, for any $M\in \Nb$, we may take $|x|\geq M$ for every $x\in Y_i$ and $i\in \Nb$.
\end{lem}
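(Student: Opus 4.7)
The plan is to construct the family $(Y_i)$ recursively: at each stage I take the shadow of a carefully chosen $H$-invariant subset at the previous depth and carve off a single $H$-orbit to form $Y_i$, leaving the remaining vertices of the shadow to feed the next stage. Since all vertices stay inside the union of subtrees rooted at the $H$-translates of $v$, every vertex encountered has finite section, and the nested structure forces independence.

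Before the construction, I would record two preliminary facts. (i) If $\psi_v(H)$ is finite, then $\psi_w(H)$ is finite for every $w\sqsupseteq v$: writing $w=vt$, the identity $\psi_w(h)=\psi_t(\psi_v(h))$ (verified by chasing the definition of sections through $h(vtu)=h(v)h_v(tu)=h(v)h_v(t)(h_v)_t(u)$) gives $|\psi_w(H)|\le|\psi_v(H)|$. (ii) The sizes of $H$-orbits of vertices descended from the orbit $H\cdot v=\{v_1,\dots,v_m\}$ admit a uniform bound. Indeed $F:=\psi_v(\st_H(v))$ is a finite subgroup of $\Aut(X^*)$, and a direct computation shows $\psi_{v_j}(\st_H(v_j))=(h_j)_v F(h_j)_v^{-1}$ for any $h_j\in H$ with $h_j(v)=v_j$, so these groups all have order $|F|$. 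Hence $\st_H(v_j)$-orbits on $v_jX^*$ have size at most $|F|$, and the $H$-orbit of any $w$ descended from some $v_j$ has size at most $K:=m|F|$.

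Now set $d:=|X|$, $O_0:=H\cdot v$, and $n_0:=|v|$, and pick $n_1\ge M$ at the first step. Recursively, given $O_{i-1}\subseteq X^{n_{i-1}}$, choose $n_i>n_{i-1}$ with $d^{n_i-n_{i-1}}>K$, let $D_i:=S(O_{i-1},n_i)$ be the shadow of $O_{i-1}$ at level $n_i$, pick any $w_i\in D_i$, and define $O_i:=H\cdot w_i$ and $Y_i:=D_i\setminus O_i$. The set $D_i$ is $H$-invariant (by the same argument as in Lemma~\ref{lem:shadow}, since the shadow of an $H$-invariant leaf set is $H$-invariant), so $Y_i$ is $H$-invariant as a difference of two $H$-invariant sets; it is a leaf set because it sits at a single level; and every vertex of $Y_i$ has finite section by fact (i). The estimate $|D_i|\ge d^{n_i-n_{i-1}}>K\ge|O_i|$ guarantees $Y_i$ is non-empty, so the recursion continues.

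For independence, any $y'\in Y_j$ with $j>i$ has, by iterating the construction, a unique prefix lying in $O_i$; since $Y_i=D_i\setminus O_i$ is disjoint from $O_i$, no vertex of $Y_i$ is a prefix of $y'$, and the reverse prefix relation is ruled out by $|y|=n_i<n_j=|y'|$. Independence of any finite subfamily follows immediately. The only real obstacle in the whole argument is fact (ii): without the uniform bound $K$ on orbit sizes, the recursion might stall when $|O_i|$ outgrows $|D_i|$, so establishing the uniformity via the conjugacy of the groups $\psi_{v_j}(\st_H(v_j))$ is the key technical point.
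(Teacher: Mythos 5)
Your proposal is correct and follows essentially the same route as the paper's proof: iterate shadows of $H$-invariant sets lying below the orbit of $v$, use finiteness of sections to obtain a uniform bound on $H$-orbit sizes so that each sufficiently deep shadow splits into at least two nonempty $H$-invariant pieces, take one piece as $Y_i$ and recurse into the other, with independence coming from the nesting; the only (cosmetic) difference is that the paper gets the orbit bound from the finite-index kernel of the $H$-action on the union of shadows of $T=\{u\in X^N\mid \psi_u(H)\ \text{finite}\}$, while you get the bound $m|F|$ from the conjugate finite groups $\psi_{v_j}(\st_H(v_j))$, which is the same computation as in Lemma~\ref{lem:shadow}. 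One small point of wording: finiteness of $\psi_x(H)$ for $x\in Y_i$ uses your fact (ii) (finiteness of the section groups along the whole orbit $H\cdot v$, together with $|H:\st_H(v_j)|<\infty$) and not just fact (i), since vertices of $Y_i$ descend from arbitrary points of the orbit rather than from $v$ itself.
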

\begin{proof}
For $N\geq 1$, let $T:=\{v\in X^N\mid \psi_v(H) \text{ is finite}\}$. The set $T$ is non-empty for any suitably large $N$; fix such an $N$. Let $X_T$ be the subset of $X^*$ consisting of all vertices that contain a vertex of $T$ as a prefix. In other words, $X_T$ is the union of $S(T,n)$ for all $n\geq N$. Note in particular, that $X_T$ is $H$-invariant by Lemma~\ref{lem:shadow}.

The kernel $\ker(H\acts X_T)$ equals the collection of $h\in H$ such that $h\in \st_H(T)$ and $\psi_T(h)=1$. The image $\psi_T(\st_H(T))$ is finite, since $\psi_v(H)$ is finite for each $v\in T$. Thus $(\psi_T)^{-1}(1)$ is of finite index in $\st_H(T)$. We deduce that $|H:\ker(H \acts X_T)|<\infty$.

We now build a family of $H$-invariant leaf sets $(W_i)_{i\in \Nb}$ along with  natural numbers $k_i\geq 1$ such that 
\begin{enumerate}[(i)]
\item $W_i=Y_i\sqcup Z_i$ where $Y_i$ and $Z_i$ are non-empty and $H$-invariant,
\item $W_{i}\subseteq S(Z_{i-1},k_{i})$,
\end{enumerate}

For the base case, the size of any orbit of $H$ on $S(T,k)$ is bounded by $|H:\ker(H \acts X_T)|$. Fixing $k_0\geq \max\{N, |H:\ker(H \acts X_T)|+1\}$, the action of $H$ on $S(T,k_0)$ has at least two orbits.  Set $W_0=S(T,k_0)$, observe that $W_0$ is a leaf set, and fix a partition $W_0=Y_0\sqcup Z_0$ where $Y_0$ and $Z_0$ are non-empty $H$-invariant subsets. Condition (i) is satisfied, and (ii) is vacuous.

Suppose that we have built the sequence up to $n$. The shadow $S(Z_n,l)$ is $H$-invariant for all $l>k_n$. As in the base case, we may find $k_{n+1}$ large enough such that $H$ has at least two orbits on $S(Z_n,k_{n+1})$. Set $W_{n+1}=S(Z_n,k_{n+1})$, observe that $W_{n+1}$ is a leaf set, and fix a partition $W_{n+1}=Y_{n+1}\sqcup Z_{n+1}$ where $Y_{n+1}$ and $Z_{n+1}$ are non-empty $H$-invariant subsets. Conditions (i) and (ii) are clearly satisfied. 

A straightforward induction argument shows the collection $(Y_i)_{i\in \Nb}$ is the desired independent family of leaf sets. That $\psi_v(H)$ is finite for each $v\in Y_i$ follows from the fact that  such a $v$ contains an element of $T$ as a prefix. Taking $k_0>M$ at stage zero of our construction ensures that $|x|\geq M$ for all $x\in Y_i$ and $i\in \Nb$.
\end{proof}

\begin{lem}\label{lem:blockform_1^n}
Let $G\leq \Aut(X^*)$ be a self-similar regular branch group with well-approximated subgroups. If $H\leq G$ is finitely generated and there is $v\in X^*$ such that $\psi_v(H)$ is finite, then $H$ is in the perfect kernel of $\sub(G)$.
\end{lem}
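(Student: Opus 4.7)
The plan is to realize $H$ as a non-isolated point of the set
\[
\mc{S} := \{L\leq G : L \text{ is finitely generated and } \psi_v(L) \text{ is finite for some } v\in X^*\}.
\]
If every member of $\mc{S}$ is a limit of distinct subgroups in $\mc{S}$, then $\overline{\mc{S}}$ is a perfect closed subset of $\sub(G)$, hence contained in the perfect kernel. As $H\in \mc{S}$, the lemma will follow.

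To build the approximating sequence, I first apply Lemma~\ref{lem:indp_fam} to $H$, obtaining an independent family $(Y_i)_{i\in\Nb}$ of $H$-invariant leaf sets with $Y_i\subseteq X^{k_i}$, $k_i\to\infty$, and $\psi_v(H)$ finite for every $v\in Y_i$. Let $K$ be a branching subgroup of $G$, and set $H_n := H\cdot \rist_K(Y_n)$. This is a subgroup because $H$-invariance of $Y_n$ together with $K\normal G$ makes $H$ normalize $\rist_K(Y_n) = \prod_{y\in Y_n} K^{\{y\}}$, and it is finitely generated since $K$ (being of finite index in the finitely generated $G$) is. Moreover $H_n$ strictly contains $H$: since $K$ is infinite but $\psi_y(H)$ is finite for $y\in Y_n$, $\psi_y(\rist_K(Y_n)) = K$ cannot lie in $\psi_y(H)$, so $\rist_K(Y_n)\not\subseteq H$.

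It remains to verify $H_n\to H$ in $\sub(G)$ and $H_n\in \mc{S}$. The convergence is where the well-approximated hypothesis enters: if $a\notin H$ lies in $H_n$, writing $a=hr$ with $r\in\rist_K(Y_n)\subseteq \st_G(k_n)$ gives $a\in H\st_G(k_n)$, and having this for infinitely many $n$ would yield $a\in\bigcap_m H\st_G(m)=H$, a contradiction. I expect $H_n\in \mc{S}$ to be the main delicate step. To obtain it, I would exploit independence: for $v\in Y_m$ with $m\neq n$, every $y\in Y_n$ is incomparable with $v$, so each element of $\rist_K(Y_n)$ fixes $v$ and acts trivially on the entire subtree below $v$. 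A direct section computation on $g=hr\in H_n$ then yields $g_v=h_v$, whence $\psi_v(H_n)=\psi_v(H)$ remains finite. Combined with the reduction in the first paragraph, this completes the argument.
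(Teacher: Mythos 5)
Your argument is correct in substance, but it takes a genuinely different route from the paper. The paper fixes a basic neighborhood $V_A$ of $H$ (with $A\subseteq G\setminus H$ finite), uses well-approximated subgroups to pick a single level $M$ with $H\st_G(M)\cap A=\emptyset$, and then attaches copies of the branching subgroup $K$ along the whole independent family of $H$-invariant leaf sets beyond level $M$, producing continuum many subgroups $HJ_\alpha\in V_A$; thus every neighborhood of $H$ is uncountable, so $H$ is a condensation point and lies in the perfect kernel. You avoid cardinality altogether: you exhibit a dense-in-itself family $\mc{S}$ containing $H$, so that $\overline{\mc{S}}$ is a perfect closed set inside the perfect kernel. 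Your approximants $H_n=H\cdot\prod_{y\in Y_n}K^{\{y\}}$ are single slices of the paper's $J_\alpha$, and the two key computations — the normalization $gK^{\{y\}}g^{-1}=K^{\{g(y)\}}$ via self-similarity and $K\normal G$, and the section computation giving $\psi_v(H_n)=\psi_v(H)$ at vertices of the other leaf sets — are parallel to computations in the paper's proof; your convergence step uses well-approximatedness much as the paper uses it to choose $M$. What your route buys is a structural by-product (the closure of the class of finitely generated subgroups with a finite section is perfect), which anticipates Theorem~\ref{thm:perfect_ker_char}; what the paper's route buys is brevity: it never has to check that the approximating subgroups stay inside a distinguished class, only that they are numerous and avoid $A$.

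Two points should be tidied. First, you use that $K$, hence $H_n$, is finitely generated, justified by finite generation of $G$; that is not a hypothesis of this lemma (though it holds in every application in the paper). You can remove it by dropping finite generation from the definition of $\mc{S}$: well-approximatedness is only invoked at the member being approximated, which in your construction is finitely generated, and a non-finitely generated member of the enlarged $\mc{S}$ is the limit of its finitely generated subgroups, which are proper and again lie in $\mc{S}$. Second, you use that the leaf sets satisfy $Y_i\subseteq X^{k_i}$ with $k_i\to\infty$ (for $\prod_{y\in Y_n}K^{\{y\}}\leq\st_G(k_n)$ and for the intersection $\bigcap_m H\st_G(m)=H$), and that distinct $Y_i$, $Y_j$ consist of pairwise incomparable vertices. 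The statement of Lemma~\ref{lem:indp_fam} only records a uniform lower bound $M$ on the levels and independence, which by itself does not force either property; however, the proof of that lemma produces shadows at strictly increasing levels, which gives exactly what you need, so you should cite the construction rather than the bare statement.
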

\begin{proof} Suppose that $K$ is a branching subgroup of $G$ and recall that $K$ is normal in $G$.

It suffices to show that every neighborhood of $H$ in $\sub(G)$ has continuum many elements. Since $H$ is finitely generated, a neighborhood base at $H$ has the form
 \[
V_{A}:=\{I\in \sub(G)\mid H\leq I\text{ and }\forall a\in A\; a\notin I\}
\]
where $A$ ranges over finite subsets of $G\setminus H$. It therefore suffices to show that each $V_A$ has size continuum. 

Fix a finite set $A\subseteq G\setminus H$. As $G$ has well-approximated subgroups, there is a level $M$ such that $H\rist_{G}(M)\cap A=\emptyset$. Applying Lemma~\ref{lem:indp_fam}, we obtain a countable independent family $(Y_i)_{i\in \Nb}$ of $H$-invariant leaf sets such that $|v|\geq M$ for $v\in Y_i$ and $\psi_v(H)$ is finite for all $v\in Y_i$. 

For each $\alpha\in \{0,1\}^{\Nb}$, define 
\[
J_{\alpha}=\grp{K^{\{x\}}\mid x\in Y_i\text{ and }\alpha(i)=1}
\]
where $K^{\{x\}}$ is the copy of $K$ which acts only on the tree below $x$. The sequence $(Y_i)_{i\in \Nb}$ is independent, so letting $Z:=\{i\in \Nb\mid \alpha(i)=1\}$,
\[
J_{\alpha}=\bigoplus_{i\in Z}K^{Y_i}\times \bigoplus_{j\in \Nb\setminus Z} \{1\}^{Y_j}.
\]
For $y\in Y_i$ and $g\in H$, $gK^{\{y\}}g^{-1}$  acts only on the tree below $y'=g(y)\in Y_i$, since $Y_i$ is $H$-invariant. The image $\psi_{y'}(g K^{\{y\}}g^{-1})$ equals $g_yK(g_y)^{-1}$. Since $K$ is normal in $G$ and $G$ is self-similar, it follows that $g_yK(g_y)^{-1}=K$. Hence, $gK^{\{y\}}g^{-1}=K^{\{g(y)\}}$, and $H$ normalizes $J_{\alpha}$. Furthermore, $HJ_{\alpha}\leq H\rist_G(M)$, so $HJ_{\alpha}\in V_A$. 

Suppose that $\alpha$ and $\beta$ are distinct elements of $\{0,1\}^{\Nb}$ and find $i$ such that $\alpha(i)\neq \beta(i)$. Without loss of generality, we assume that $\alpha(i)=1$ while $\beta(i)=0$. Fixing $v\in Y_i$, $(hy)_{v}=h_vy_v$ for any $hy\in (HJ_{\gamma})_{(v)}$ and $\gamma\in \{0,1\}^{\Nb}$, since $y$ must fix $v$. It now follows that $\psi_v({HJ_{\alpha}})$ is infinite while $\psi_v({HJ_{\beta}})=\psi_v(H)$ is finite. Hence, $HJ_{\alpha}\neq HJ_{\beta}$. We conclude that $V_A$ contains uncountably many subgroups of $G$, and the lemma follows.
\end{proof}

\subsection{Infra-direct product subgroups: Case (b)}

We now turn our attention to the second type of subgroup described by the alternative. We begin with a proposition.

\begin{prop}\label{prop:rank_zero}
For $G\leq \Aut(X^*)$ a finitely generated group that is LERF, the finite index subgroups of $G$ are exactly the subgroups with Cantor--Bendixson rank $0$.
\end{prop}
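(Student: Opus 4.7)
The plan is to prove the two implications separately. The forward direction---that finite index subgroups have Cantor--Bendixson rank zero---is exactly Observation~\ref{obs:fin_index}, so the content lies in the converse. Suppose $H \leq G$ is isolated in $\sub(G)$, witnessed by a basic neighborhood $O_{A,C}$ of $H$ containing no other subgroup, where $A$ and $C$ are finite subsets of $G$. Then any $K$ with $\langle C\rangle \leq K \leq H$ lies in $O_{A,C}$ (since $C \subseteq K$ and $A\cap K \subseteq A \cap H = \emptyset$), so $K = H$; taking $K = \langle C\rangle$ shows in particular that $H$ is finitely generated.

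Now assume for contradiction that $H$ has infinite index in $G$. Since $G$ is LERF and $H$ is finitely generated, $H$ is separable, so I can write $H = \bigcap_{n \in \Nb} K_n$ for a family of finite index subgroups $K_n \leq G$; after replacing $K_n$ by $K_1 \cap \dots \cap K_n$, I may arrange the sequence to be decreasing. The plan is to show $K_n \to H$ in the Chabauty topology, which contradicts isolation of $H$, since each $K_n$ has finite index in $G$ and therefore differs from $H$.

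For the convergence, let $O_{A',C'}$ be any basic neighborhood of $H$. Then $C' \subseteq H \subseteq K_n$ for every $n$, so it remains only to check that $A' \cap K_n = \emptyset$ for $n$ sufficiently large. Given $a \in A'$, since $a \notin H = \bigcap_m K_m$ there exists $m_a$ with $a \notin K_{m_a}$, and the monotonicity of $(K_m)$ forces $a \notin K_m$ for all $m \geq m_a$; taking the maximum of $m_a$ over the finite set $A'$ completes the argument. The role of LERF is precisely to produce the separable decomposition, and I do not anticipate any serious obstacle beyond this routine verification of convergence; the finitely-generated hypothesis on $G$ is only used tacitly via Observation~\ref{obs:fin_index} for the forward direction.
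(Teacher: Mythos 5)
Your proof is correct and follows essentially the same route as the paper: the forward direction is Observation~\ref{obs:fin_index}, and for the converse the paper likewise deduces that an isolated $H$ must be finitely generated (being a limit of its finitely generated subgroups) and then uses LERF to write $H$ as an intersection of finite index subgroups, forcing $H$ itself to have finite index. You merely make explicit the Chabauty convergence of the decreasing sequence of finite index subgroups, which the paper leaves implicit.
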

\begin{proof}
Any finite index subgroup has rank zero since $G$ is finitely generated. Conversely, suppose that $H\in \sub(G)$ has rank $0$. The subgroup $H$ is isolated in $\sub(G)$ and must be finitely generated as otherwise $H$ can be approximated by its finitely generated subgroups. As $G$ is LERF, $H$ is the intersection of finite index subgroups. Since $H$ is isolated in $\sub(G)$, it must itself be a finite index subgroup.
\end{proof}

\begin{thm}\label{thm:sub-direct_rank}
Suppose that $G\leq \Aut(X^*)$ is finitely generated, just infinite, and strongly self-replicating, and has well-approximated subgroups. For $Y$ a spanning leaf set, if $H\leq G$ is such that $\rrist_H(Y)$ is an infra-direct product of $G^Y$, then the $\cb(H)=\depth(H)$. In particular, $\cb(H)<\omega$.
\end{thm}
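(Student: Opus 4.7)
The plan is induction on $d := \depth(H)$, aiming for both $\cb(H)\leq d$ and $\cb(H)\geq d$ simultaneously. Throughout I rely on Lemma~\ref{lem:well-approximated} (so $G$ is LERF and has the CSP) and on Lemmas~\ref{lem:subdirect_fg} and~\ref{lem:finite_depth} (so $H$ is finitely generated and $d<\infty$). The base case $d=0$ is Proposition~\ref{prop:rank_zero}: $H$ has finite index, so $\cb(H)=0$.

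For the upper bound in the inductive step, I show that no sequence of distinct $L_n\neq H$ with $\cb(L_n)\geq d$ can converge to $H$. If $L_n \to H$ under these hypotheses, then $H\leq L_n$ eventually because $H$ is finitely generated. I split the tail: the $L_n$ with $[L_n:H]<\infty$ form a finite set by Corollary~\ref{cor:fin_extensions_branch} (since $\st_H(Y)$ is finite-index in $H$, hence finitely generated, hence separable in the LERF group $G$); the $L_n$ with $[L_n:H]=\infty$ satisfy $\depth(L_n)\leq d-1$, and the inclusion $\rrist_H(Y)\leq \rrist_{L_n}(Y)$ transfers the infra-direct hypothesis to $L_n$, so the inductive hypothesis forces $\cb(L_n)=\depth(L_n)\leq d-1$, contradicting $\cb(L_n)\geq d$. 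Either branch exhausts the tail after finitely many terms, contradicting distinctness, so $\cb(H)\leq d$.

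For the lower bound, I need to produce a sequence of distinct $L_n\to H$ with each $\cb(L_n)\geq d-1$, which will place $H$ in the $d$-th derivative of $\sub(G)$. Fixing a chain $G=H_0>H_1>\dots>H_d=H$ witnessing the depth, the penultimate $H_{d-1}$ already has $\depth(H_{d-1})=d-1$ and inherits the infra-direct property, so by induction $\cb(H_{d-1})=d-1$; but one specimen is not a sequence. My plan for obtaining infinitely many approximants is to use the lower-leaf-set machinery of Lemma~\ref{lem:lower_leaf_set_key_lem}: $\rrist_H(Y_n)$ embeds with finite-index image via $\psi_W$ into $G^W$, and the coordinates of $Y_n\setminus W$ provide room to inflate $H$ in controlled ways. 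Adjoining elements of $\rist_G(v)\setminus H$ at coordinates $v$ of ever-increasing depth below $Y_n\setminus W$ produces distinct enlargements $L_n\supsetneq H$ whose depth remains exactly $d-1$; pushing the supports of these elements deeper into the tree, and using that $G$ has well-approximated subgroups to strip off any fixed $g\in G\setminus H$ for large $n$, ensures $L_n\to H$.

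The principal obstacle is calibrating the enlargements so that $\depth(L_n)$ lands at exactly $d-1$ rather than at a smaller value, while simultaneously producing infinitely many distinct $L_n$ that actually converge to $H$ in the Chabauty topology. Matching the combinatorial ``depth budget'' $|Y_n\setminus W|$ appearing in Lemma~\ref{lem:finite_depth} with the subgroup-chain depth is precisely the bookkeeping that makes the induction close.
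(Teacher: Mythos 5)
Your base case and your upper bound are essentially the paper's argument and are fine: after passing to a tail with $H\leq A_i$, Corollary~\ref{cor:fin_extensions_branch} kills the finite-index enlargements and the inductive hypothesis (legitimately transferred, since $\st_H(Y)\leq \st_{A_i}(Y)$) bounds the rank of the infinite-index ones. The genuine gap is the lower bound. Your plan --- inflate $H$ by adjoining rigid-stabilizer elements at deep vertices below the coordinates of $Y_n\setminus W$ --- is only a sketch, and the step you yourself flag as the ``principal obstacle'' is exactly the step that is missing: nothing in the proposal shows that $\grp{H,x}$ has depth exactly $d-1$ (it could be a finite extension of $H$, in which case the inductive hypothesis does not apply and Corollary~\ref{cor:fin_extensions_branch} even limits how many such enlargements exist, or it could have strictly smaller depth), nor is the convergence $L_n\to H$ actually verified. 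As written, the induction does not close.

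The paper's lower bound avoids this calibration problem entirely by approaching $H$ from above \emph{inside} the penultimate term of a depth chain rather than by inflating $H$ with new tree elements. Fix $G=H_0>H_1>\dots>H_d=H$ with all indices infinite. Since $H$ is finitely generated and $G$ has well-approximated subgroups, $H=\bigcap_{i}O_i$ for a $\subseteq$-decreasing sequence of finite-index subgroups $O_i\leq G$, and Lemma~\ref{lem:sub(G)_convergence} gives $O_i\cap H_{d-1}\to H\cap H_{d-1}=H$. Each $L_i:=O_i\cap H_{d-1}$ contains $H$, so $\st_{L_i}(Y)$ is again an infra-direct product; $L_i$ has finite index in $H_{d-1}$, so $\depth(L_i)\geq d-1$ via the chain $H_0>\dots>H_{d-2}>L_i$, while $\depth(L_i)\leq d-1$ because $|L_i:H|=\infty$ (as $|H_{d-1}:H|=\infty$) would otherwise force $\depth(H)\geq d+1$. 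Hence by induction $\cb(L_i)=d-1$, the $L_i$ are distinct from $H$, and $\cb(H)\geq d$. This is the piece your argument needs; with it, your upper bound gives $\cb(H)=\depth(H)$.
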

\begin{proof}
The hypotheses of Lemma~\ref{lem:finite_depth} are satisfied, so  $\depth(H)<\infty$.

We now argue by induction on $\depth(H)$ for the claim. If $\depth(H)=0$, then $H$ has finite index in $G$, and Proposition~\ref{prop:rank_zero} ensures that $\cb(H)=0$. For the successor case, suppose that $\depth(H)=n+1$ and say that $A_i$ is a sequence of distinct subgroups that converges to $H$ in $\sub(G)$. We argue that all but finitely many terms of the sequence are such that $\cb(A_i)\leq n$. By Lemma~\ref{lem:subdirect_fg}, $H$ is finitely generated, so we may assume, by possibly deleting finitely many terms from the sequence, that $H\leq A_i$ for all $i$. Corollary~\ref{cor:fin_extensions_branch} tells us that only finitely many of $A_i$ can be such that $|A_i:H|<\infty$. Possibly deleting finitely many more terms, we may assume that $|A_i:H|=\infty$ for all $i$. 

For each $i$, $|A_i:H|=\infty$, so $\depth(A_i)<\depth(H)=n+1$. Applying the inductive hypothesis, $\cb(A_i)\leq n$. It now follows that there is a neighborhood of $H$ in $\sub(G)$ consisting of subgroups of rank at most $n$. Hence, $\cb(H)\leq n+1$. 

Conversely, find a sequence $G=L_0>\dots>L_{n+1}=H$ such that $|L_i:L_{i+1}|=\infty$ for each $i$. The stabilizer $\st_{L_n}(Y)$ is an infra-direct product of $G^Y$, so by Lemma~\ref{lem:subdirect_fg}, $\st_{L_n}(Y)$ is finitely generated. The group $\st_{L_n}(Y)$ is of finite index in $L_n$, so $L_n$ is finitely generated. The group $L_n$ is thus finitely generated with depth $n$, so by the inductive hypothesis, $\cb(L_n)=n$. 

Let $O_i$ be an $\subseteq$-decreasing sequence of finite index subgroups of $G$ such that $\bigcap_{i\in \Nb}O_i=H$. The sequence $O_i\cap L_n$ converges to $H$ in $\sub(H)$. The terms $O_i\cap L_n$ have depth at most $n$, since else we contradict the depth of $H$, and on the other hand, they have depth at least $n$, witnessed by the sequence $L_0>\dots >L_{n-1}>O_i\cap L_n$. Applying the inductive hypothesis, $\cb(O_i\cap L_n)=n$. The sequence $(O_i\cap L_n)_{i\in \Nb}$ is thus a sequence of rank $n$ groups converging to $H$, so $\cb(H)\geq n+1$. Hence, $\cb(H)=n+1$, and the induction is complete.
\end{proof}

\section{The structure of $\sub(G)$}

We are now prepared to give a detailed picture of $\sub(G)$ for $G$ from a certain class of well-behaved branch groups. 

\begin{thm}\label{thm:perfect_ker_char}
Suppose that $G\leq \Aut(X^*)$ is a finitely generated regular branch group that is just infinite, is strongly self-replicating, has well-approximated subgroups, and obeys the Grigorchuk--Nagnibeda alternative. For $H\leq G$, $H$ is in the  perfect kernel of $\sub(G)$ if and only if either
\begin{enumerate}[(1)]
\item $\psi_y(H)$ is finite for some $y\in X^*$, or
\item $H$ is non-finitely generated.
\end{enumerate}
\end{thm}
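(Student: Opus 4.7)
The plan is to establish both implications. The forward direction is immediate from the material already developed: if neither (1) nor (2) holds, then $H$ is finitely generated with every section $\psi_y(H)$ infinite, so the Grigorchuk--Nagnibeda alternative places $H$ in case (b), and Theorem~\ref{thm:sub-direct_rank} gives $\cb(H) = \depth(H) < \omega$. Hence $H$ lies outside the perfect kernel.

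For the reverse direction, the case that $H$ is finitely generated and $\psi_y(H)$ is finite for some $y$ is exactly Lemma~\ref{lem:blockform_1^n}. So I may assume $H$ is not finitely generated. Enumerating $H = \{h_n\}_{n \in \Nb}$ and setting $L_n := \langle h_1, \dots, h_n \rangle$ produces an increasing chain of finitely generated subgroups with $L_n \to H$ in $\sub(G)$. I apply the Grigorchuk--Nagnibeda alternative to each $L_n$ to obtain a dichotomy: either infinitely many $L_n$ satisfy (a), in which case Lemma~\ref{lem:blockform_1^n} puts each such $L_n$ in the perfect kernel and closedness of the perfect kernel forces $H$ into it; or eventually every $L_n$ falls into case (b), a situation I handle by contradiction.

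So assume we are in the second horn and $H$ is not in the perfect kernel, meaning $\cb(H) = \alpha < \omega$ and $H$ is isolated in $\sub(G)^{\alpha}$. Because $L_n \to H$ with $L_n \neq H$ (since $L_n$ is finitely generated while $H$ is not), we have $\cb(L_n) < \alpha$ for all large $n$, and Theorem~\ref{thm:sub-direct_rank} translates this to $\depth(L_n) < \alpha$. Depth is non-increasing along an ascending chain: from a chain of infinite-index inclusions ending at $L_{n+1}$, replacing the last term with $L_n \leq L_{n+1}$ gives a chain of the same length ending at $L_n$ with all steps still of infinite index. Since the sequence $\depth(L_n)$ is non-increasing and bounded, it stabilizes from some stage $N$ onward, and once $\depth(L_n) = \depth(L_{n+1})$ the inclusion $L_n \leq L_{n+1}$ must have finite index, else appending $L_n$ to a chain witnessing the depth of $L_{n+1}$ would strictly increase the depth.

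The main obstacle is then dispatched via Corollary~\ref{cor:fin_extensions_branch}. The subgroup $L_N$ lies in case (b) of the Grigorchuk--Nagnibeda alternative and is separable in $G$ since $G$ is LERF by Lemma~\ref{lem:well-approximated} and $L_N$ is finitely generated. Corollary~\ref{cor:fin_extensions_branch} therefore produces only finitely many subgroups $I$ with $L_N \leq I \leq G$ and $|I : L_N| < \infty$. The chain $(L_n)_{n \geq N}$ consists precisely of such finite-index overgroups of $L_N$, so it takes values in a finite set and stabilizes at some $L_\infty$. But then $H = \bigcup_n L_n = L_\infty$ is finitely generated, contradicting our standing hypothesis. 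Thus $H$ is in the perfect kernel.
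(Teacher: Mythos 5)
Your argument is correct, and its overall skeleton matches the paper's: the forward direction via the Grigorchuk--Nagnibeda alternative plus Theorem~\ref{thm:sub-direct_rank}, the finitely generated case of the reverse direction via Lemma~\ref{lem:blockform_1^n}, and the non-finitely generated case via approximating $H$ by a chain of finitely generated subgroups and using closedness of the perfect kernel. Where you diverge is in disposing of the second horn (eventually every $L_n$ in case (b)). The paper kills this in one line: if any finitely generated $B\leq H$ has $\st_B(Y)$ an infra-direct product of $G^Y$, then so does $\st_H(Y)$ (the projections only grow), and Lemma~\ref{lem:subdirect_fg} applied to $H$ itself forces $H$ to be finitely generated --- so case (b) simply cannot occur for subgroups of a non-finitely generated $H$, and no dichotomy is needed. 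You instead reach the same contradiction through heavier machinery: Theorem~\ref{thm:sub-direct_rank} to make each $\depth(L_n)$ finite, monotonicity of depth along the ascending chain, and Corollary~\ref{cor:fin_extensions_branch} to force the chain to stabilize. This is valid (the separability input is fine: $\st_{L_N}(Y)$ has finite index in the finitely generated $L_N$, hence is finitely generated and separable since $G$ is LERF by Lemma~\ref{lem:well-approximated}), but note two things. First, your assertion that $H\notin$ perfect kernel gives $\cb(H)=\alpha<\omega$ is not justified at this stage --- a priori a point outside the perfect kernel could have any ordinal rank below $\cb(\sub(G))$, and that the rank is $\omega$ is only proved later, using this theorem; fortunately the bound is never actually needed, since finiteness of each $\depth(L_n)$ already comes from Theorem~\ref{thm:sub-direct_rank} (or Lemma~\ref{lem:finite_depth}) and a non-increasing sequence of natural numbers stabilizes. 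Second, your contradiction in that horn is with the non-finite-generation of $H$, not with the hypothesis $H\notin$ perfect kernel, so the hypothesis is doing no work there; this is a sign that the horn is vacuous, which is exactly the shortcut the paper takes. What your route buys is nothing beyond the paper's, at the cost of invoking the rank and commensurator apparatus; what the paper's observation buys is that the upward monotonicity of the infra-direct product condition reduces the whole non-finitely generated case to Lemma~\ref{lem:subdirect_fg}.
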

\begin{proof} Let $K$ be a branching subgroup for $G$. 

Let us first see that the perfect kernel contains all subgroups of the two forms stated. If $H\leq \Gamma$ has form (1), then Lemma~\ref{lem:blockform_1^n} ensures that $H$ is in the perfect kernel. 

Suppose that $H$ is non-finitely generated and let $B\leq H$ be a finitely generated subgroup. If there is a spanning leaf set $Y\subseteq X^*$ such that $\rrist_B(Y)$ is an infra-direct product in $G^Y$, then $\rrist_H(Y)$ is an infra-direct product of $G^Y$. In view of Lemma~\ref{lem:subdirect_fg}, $\rrist_H(Y)$ is finitely generated, and it follows that $H$ is finitely generated, which contradicts our assumption on $H$. It is thus the case that no finitely generated $B\leq H$ admits a spanning leaf set $Y$ such that $\rrist_B(Y)$ is an infra-direct product of $G^Y$. Since $G$ obeys the Grigorchuk--Nagnibeda alternative, each finitely generated subgroup admits $y$ such that the group of sections at $y$ is finite. Each finitely generated subgroup of $H$ is thereby an element of the perfect kernel. Noting that $H$ is the limit of its finitely generated subgroups in $\sub(G)$ and that the perfect kernel is closed, $H$ is in the perfect kernel. 

Conversely, suppose that $H$ is an element of the perfect kernel. If $H$ is non-finitely generated, (2) holds, and we are done. Let us then suppose that $H$ is finitely generated. As $G$ satisfies the Grigorchuk--Nagnibeda alternative, either $\psi_y(H)$ is finite for some $y$, or there is a spanning leaf set $Y$ such that $\rrist_H(Y)$ is an infra-direct product of $G^Y$. In the latter case, Theorem~\ref{thm:sub-direct_rank} implies that $H$ has finite rank, which is absurd. We conclude that $\psi_v(H)$ is finite for some $y$, so $(1)$ holds.  
\end{proof}

\begin{thm}\label{thm:rank_char}
Suppose that $G\leq \Aut(X^*)$ is a finitely generated regular branch group that is just infinite, is strongly self-replicating, has well-approximated subgroups, and obeys the Grigorchuk--Nagnibeda alternative. For $L\leq G$, the following are equivalent:
\begin{enumerate}
\item $L$ is not a member of the perfect kernel of $\sub(G)$.
\item There is a spanning leaf set $Y$ such that $\rrist_L(Y)$ is an infra-direct product of $G^Y$.
\item  $\cb(L)<\omega$.
\item $\depth(L)<\infty$ and $L$ is finitely generated.
\end{enumerate}
\end{thm}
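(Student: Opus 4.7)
The plan is to establish the cycle $(1) \Rightarrow (2) \Rightarrow (3) \Rightarrow (4) \Rightarrow (1)$, chaining Theorem~\ref{thm:perfect_ker_char}, Theorem~\ref{thm:sub-direct_rank}, and the Grigorchuk--Nagnibeda alternative. For $(1) \Rightarrow (2)$, the contrapositive of Theorem~\ref{thm:perfect_ker_char} gives that $L$ is finitely generated and $\psi_y(L)$ is infinite for every $y \in X^*$, so case (a) of the Grigorchuk--Nagnibeda alternative fails and case (b) supplies the required spanning leaf set. For $(2) \Rightarrow (3)$, I apply Theorem~\ref{thm:sub-direct_rank} to conclude $\cb(L) = \depth(L) < \omega$. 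For $(3) \Rightarrow (4)$, the assumption $\cb(L) < \omega$ forces $L \notin X^{\omega}$, so $L$ is outside the perfect kernel; the argument used in $(1) \Rightarrow (2)$ then yields a spanning leaf set $Y$ with $\rrist_L(Y)$ an infra-direct product of $G^Y$, whereupon Lemma~\ref{lem:subdirect_fg} gives finite generation of $L$ and Lemma~\ref{lem:finite_depth} gives $\depth(L) < \infty$.

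The substantive direction is $(4) \Rightarrow (1)$. Suppose $L$ is finitely generated with $\depth(L) < \infty$ and, for a contradiction, assume $L$ is in the perfect kernel. Theorem~\ref{thm:perfect_ker_char} then yields some $y \in X^*$ with $\psi_y(L)$ finite, and Lemma~\ref{lem:indp_fam} upgrades this to a countable independent family $(Y_i)_{i \in \Nb}$ of $L$-invariant leaf sets on which every section of $L$ is finite. Fix a branching subgroup $K$ of $G$; for each $n \in \Nb$ and every $0 \leq k \leq n$ set
\[
H_k := L \cdot \grp{K^{\{x\}} \mid x \in \textstyle\bigcup_{i \geq k} Y_i}.
\]
As in the proof of Lemma~\ref{lem:blockform_1^n}, the identity $l K^{\{x\}} l^{-1}=K^{\{l(x)\}}$ together with the $L$-invariance of each $Y_i$ makes $L$ normalize the right-hand factor, so $H_k$ is a genuine subgroup and $L = H_n \leq H_{n-1} \leq \cdots \leq H_0$.

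I will then verify that each inclusion $H_k \leq H_{k-1}$ has infinite index. For any $v \in Y_{k-1}$, the leaf-set and independence properties force $x$ and $v$ to be prefix-incomparable for every $x \in \bigcup_{i \geq k} Y_i$, so each generator $K^{\{x\}}$ appearing in $H_k$ fixes $v$ and has trivial section at $v$. Decomposing a stabilizing element $ln \in \st_{H_k}(v)$ with $l \in L$ and $n$ a product of such generators gives $l \in \st_L(v)$ and $\psi_v(ln) = \psi_v(l)$, so $\psi_v(\st_{H_k}(v)) \leq \psi_v(L)$ is finite, whereas $\psi_v(\st_{H_{k-1}}(v)) \supseteq \psi_v(K^{\{v\}}) = K$ is infinite. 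Since the extra generators of $H_{k-1}$ pointwise fix $Y_{k-1}$, the groups $H_{k-1}$ and $H_k$ have identical orbit structure on $Y_{k-1}$, and an orbit--stabilizer comparison at $v$ promotes this infinite index of sections to $[H_{k-1} : H_k] = \infty$. Prepending $G$ to the chain when $[G : H_0] = \infty$, or dropping $H_0$ and starting at $H_1$ otherwise (noting that $[G:H_0]<\infty$ forces $[G:H_1]=\infty$), produces a descending chain from $G$ to $L$ with at least $n$ infinite-index steps, so $\depth(L) \geq n$ for every $n$, contradicting $\depth(L) < \infty$. The main obstacle is precisely this chain construction: the careful bookkeeping that converts prefix-incomparability of the $Y_i$'s into annihilation of sections under $\psi_v$, and the orbit--stabilizer passage from infinite index of sections to infinite index of subgroups of $G$.
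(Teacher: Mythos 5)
Your proposal follows the paper's proof in all essentials: (1)$\Rightarrow$(2) via Theorem~\ref{thm:perfect_ker_char} and the alternative, (2)$\Rightarrow$(3),(4) via Theorem~\ref{thm:sub-direct_rank} and Lemma~\ref{lem:subdirect_fg}, and for (4)$\Rightarrow$(1) the same contradiction through Lemma~\ref{lem:indp_fam} and copies of the branching subgroup $K$ supported over the invariant leaf sets; the paper uses the increasing chain $LJ_m$ with $J_m=\grp{K^{\{x\}}\mid x\in\bigcup_{i=0}^m Y_i}$, whereas you use the decreasing ``tail'' version, and you additionally spell out the verification that consecutive indices are infinite, which the paper only asserts. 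One slip to correct: with $H_k:=L\cdot\grp{K^{\{x\}}\mid x\in\bigcup_{i\geq k}Y_i}$ it is \emph{not} true that $L=H_n$; the infinite tail union makes $H_n$ properly contain $L$ (indeed $|H_n:L|=\infty$, by your own section comparison at any $v\in Y_n$), so as written your chain does not terminate at $L$, which the definition of $\depth(L)$ requires. This is repaired immediately either by using finite unions (e.g.\ $H_k:=L\cdot\grp{K^{\{x\}}\mid x\in\bigcup_{i=k}^{n}Y_i}$ with $H_{n+1}=L$, which is the paper's bookkeeping in reverse) or by appending the extra infinite-index step $H_n>L$; with that adjustment the argument is complete and coincides with the paper's.
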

\begin{proof}
(1)$\Rightarrow$(2). If (1) holds, then $L$ is finitely generated and there is no $y\in X^*$ such that $\psi_y(L)$ is finite. The Grigorchuk--Nagnibeda alternative implies that (2) holds. 

\medskip

(2)$\Rightarrow$(3) and (2)$\Rightarrow$(4). Suppose that (2) holds. From Theorem~\ref{thm:sub-direct_rank}, (3) holds, and $\depth(L)<\infty$. In view of Lemma~\ref{lem:subdirect_fg}, $L$ is also finitely generated, so (4) holds.

\medskip
That (3)$\Rightarrow$(1) holds is immediate, and it then follows  from the previous paragraph that (3)$\Rightarrow$(4).

\medskip

(4)$\Rightarrow$(1). We will prove the contrapositive of this implication. Suppose that (1) fails. In view of Theorem~\ref{thm:perfect_ker_char} either $L$ is not finitely generated or there is $y\in X^*$ such that $\psi_y(L)$ is finite. In the former case, we are done. Suppose that there is $y$ with $\psi_y(L)$ is finite. Appealing to Lemma~\ref{lem:indp_fam}, we may find an independent family $(Y_i)_{i\in \Nb}$ of $L$-invariant leaf sets such that $\psi_x(L)=\{1\}$ for all $x\in Y_i$. 

Let $K$ be the branching subgroup of $G$ and for each $m\in \Nb$, define 
\[
J_m:=\grp{K^{x}\mid x\in \bigcup_{i=0}^mY_i}=\bigoplus_{i\leq m}K^{Y_i}\times \bigoplus_{j>m} \{1\}^{Y_j}.
\]
The group $L$ normalizes $J_m$. Furthermore, $LJ_m<LJ_{m+1}$ and $|LJ_{m+1}:LJ_m|=\infty$. The sequence $(LJ_m)_{m\in \Nb}$ thus demonstrates that $\depth(H)=\infty$. Hence, (4) fails.
\end{proof}

Theorems \ref{thm:rank_char} and \ref{thm:sub-direct_rank} now give a clean description of the subgroups not in the perfect kernel.

\begin{cor}\label{cor:complement_of_ker}
Suppose that $G\leq \Aut(X^*)$ is a finitely generated regular branch group that is just infinite, is strongly self-replicating, has well-approximated subgroups, and obeys the Grigorchuk--Nagnibeda alternative. If $L\leq G$ is not a member of the perfect kernel of $\sub(G)$, then
\begin{enumerate}
\item $L$ is finitely generated, 
\item $\cb(L)<\omega$, and
\item $\cb(L)=\depth(L)$.
\end{enumerate}
\end{cor}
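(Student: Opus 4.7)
The plan is to observe that this corollary is essentially a packaging of Theorem~\ref{thm:rank_char} together with Theorem~\ref{thm:sub-direct_rank}, with no further technical work required. The hypothesis that $L$ is not in the perfect kernel is precisely condition (1) of Theorem~\ref{thm:rank_char}, so I can invoke the equivalences there to obtain immediately that $L$ is finitely generated, $\cb(L)<\omega$, and $\depth(L)<\infty$, together with the existence of a spanning leaf set $Y$ such that $\rrist_L(Y)$ is an infra-direct product of $G^Y$. This establishes parts (1) and (2) directly.

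For part (3), I would then feed the spanning leaf set $Y$ produced above into Theorem~\ref{thm:sub-direct_rank}: since $G$ satisfies the standing hypotheses of that theorem (finitely generated, just infinite, strongly self-replicating, well-approximated subgroups), and since $\rrist_L(Y)$ is an infra-direct product of $G^Y$, the theorem yields $\cb(L)=\depth(L)$. Together with the finite depth obtained from Theorem~\ref{thm:rank_char}, this completes the claim.

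The only substantive content was proved in the previous two theorems, so there is no real obstacle here; the main thing to check is that the hypotheses transfer cleanly. Theorem~\ref{thm:rank_char} requires the full bundle of hypotheses, including the Grigorchuk--Nagnibeda alternative (which is what converts the non-perfect-kernel condition into the existence of a suitable spanning leaf set), while Theorem~\ref{thm:sub-direct_rank} does not need the Grigorchuk--Nagnibeda alternative but does need well-approximated subgroups. Both sets of hypotheses are included in the hypotheses of the corollary, so the citations go through verbatim and the proof reduces to a two-line combination of the earlier results.
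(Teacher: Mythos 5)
Your proposal is correct and matches the paper exactly: the paper derives this corollary directly from Theorems~\ref{thm:rank_char} and~\ref{thm:sub-direct_rank} in precisely the way you describe, with no additional argument. Your check that the Grigorchuk--Nagnibeda alternative is only needed for the first theorem and that the remaining hypotheses cover the second is a careful reading of the same two-step reduction.
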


We can also compute exactly the Cantor-Bendixson rank of $\sub(G)$.
\begin{cor}\label{cor:CB-rank_GN-alt}
If $G\leq \Aut(X^*)$ is a finitely generated branch group that is just infinite, is strongly self-replicating, has well-approximated subgroups, and obeys the Grigorchuk--Nagnibeda alternative, then $\sub(G)$ has Cantor--Bendixson rank $\omega$.
\end{cor}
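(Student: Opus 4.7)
I would prove $\cb(\sub(G)) = \omega$ by establishing the two inequalities separately.

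The upper bound $\cb(\sub(G)) \leq \omega$ would follow immediately from Corollary~\ref{cor:complement_of_ker}: every $L \in \sub(G)$ outside the perfect kernel has $\cb(L) < \omega$, so $(\sub(G))^{\omega}$ is contained in the perfect kernel. Since the perfect kernel lies in every iterated derivative, the two sets coincide, and as the perfect kernel has no isolated points, $(\sub(G))^{\omega+1} = (\sub(G))^{\omega}$.

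For the lower bound $\cb(\sub(G)) \geq \omega$, my plan is to exhibit, for each $n \in \Nb$, a subgroup of $G$ with Cantor--Bendixson rank at least $n$. Fixing a branching subgroup $K$ for $G$, so that $K^{X^m}$ has finite index in $G$ for every $m$, I would take $\Delta_m := \{(k,k,\dots,k) : k \in K\} \leq K^{X^m}$, the full diagonal subgroup. Since $\Delta_m \leq \st_G(m)$ and $\psi_v(\Delta_m) = K$ has finite index in $G$ for every $v \in X^m$, the pointwise stabilizer $\rrist_{\Delta_m}(X^m)$ is an infra-direct product of $G^{X^m}$, so Theorem~\ref{thm:sub-direct_rank} applies and gives $\cb(\Delta_m) = \depth_G(\Delta_m)$. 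To bound the depth from below, I would consider for each partition $\mc{P}$ of $X^m$ the partition-diagonal $\Delta_{\mc{P}} \leq K^{X^m}$ consisting of tuples constant on each block of $\mc{P}$. Merging two blocks passes to a subgroup of index $|K| = \infty$; iterating from the singleton partition (which recovers $K^{X^m}$) down to the one-block partition (which recovers $\Delta_m$) yields a chain $K^{X^m} = \Delta^{(0)} > \Delta^{(1)} > \cdots > \Delta^{(|X|^m - 1)} = \Delta_m$ in which every step has infinite index. The only finite-index step is $G > K^{X^m}$, which I would sidestep by noting $|G:\Delta^{(1)}| = |G : K^{X^m}| \cdot |K|$ is infinite, so the chain $G > \Delta^{(1)} > \cdots > \Delta_m$ has $|X|^m - 1$ infinite-index steps. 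Consequently $\depth_G(\Delta_m) \geq |X|^m - 1$, and picking $m$ with $|X|^m - 1 \geq n$ completes the lower bound.

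The main obstacle is constructing, for each $n$, a subgroup of $G$ of depth at least $n$ that still satisfies the infra-direct product hypothesis of Theorem~\ref{thm:sub-direct_rank}. The full-diagonal construction addresses both requirements simultaneously: all coordinate projections of $\Delta_m$ equal $K$ and so automatically have finite index in $G$, while the infinite cardinality of $K$ guarantees that each partition-coarsening step contributes an infinite-index drop, providing the needed tower of subgroups.
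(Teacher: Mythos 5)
Your proposal is correct and essentially reproduces the paper's proof: the upper bound comes from the finite-rank characterization of subgroups outside the perfect kernel, and the lower bound from the diagonal copy of the branching subgroup $K$ over a spanning leaf set (you take the level $X^m$, the paper takes any spanning leaf set $Y$ with $|Y|>n$), whose depth is at least $|Y|-1$ and whose rank equals its depth by Theorem~\ref{thm:sub-direct_rank}. Your partition-diagonal chain simply makes explicit the depth estimate that the paper asserts without proof, so the two arguments coincide in substance.
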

\begin{proof}
Theorem~\ref{thm:rank_char} ensures the Cantor--Bendixson rank is at most $\omega$. Conversely, given a $n\geq 1$, let $Y\subseteq X^*$ be spanning leaf set with $|Y|>n$. Let $K$ be the branching subgroup of $G$ and for each $k\in K$ let $f_k\in K^Y$ be such that $\psi_y(f_k)=k$ for all $y\in Y$. Setting $H:=\grp{f_k\mid k\in K}$, we see that $H$ is an infra-direct product of $G^Y$. The depth of $H$ in $K^Y$ is at least $|Y|-1$, so $\depth_G(H)\geq n$. Theorem \ref{thm:sub-direct_rank} ensures that $\cb(H)=\depth(H)$, so $\cb(H)\geq n$. The Cantor--Bendixson rank of $G$ is thus $\omega$.
\end{proof}

Finally, we make an observation concerning the commensurators of subgroups outside the perfect kernel.
\begin{cor}
Suppose that $G\leq \Aut(X^*)$ is a finitely generated regular branch group that is just infinite, is strongly self-replicating, has well-approximated subgroups, and obeys the Grigorchuk--Nagnibeda alternative. If $L\leq G$ is not a member of the perfect kernel of $\sub(G)$, then $|\comm_G(L):L|$ is finite.
\end{cor}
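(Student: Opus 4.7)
The plan is to reduce this corollary directly to Lemma~\ref{lem:comm_branch}, using Theorem~\ref{thm:rank_char} to obtain the hypotheses needed. First, since $L$ is not in the perfect kernel, Theorem~\ref{thm:rank_char} furnishes a spanning leaf set $Y \subseteq X^*$ such that $\rrist_L(Y)$ is an infra-direct product of $G^Y$, and also tells us that $L$ is finitely generated.

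Next, I would note that $\rrist_L(Y) \leq \st_L(Y)$, so for each $y \in Y$ we have $\psi_y(\rrist_L(Y)) \leq \psi_y(\st_L(Y)) \leq G$; since the outer group contains a finite-index subgroup of $G$, the middle group is itself of finite index in $G$. Thus $\st_L(Y)$ is also an infra-direct product of $G^Y$.

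The remaining hypothesis of Lemma~\ref{lem:comm_branch} is that $\st_L(Y)$ is separable in $G$. Because $Y$ is finite, $\st_L(Y)$ has finite index in $L$ (the action of $L$ on $Y$ factors through a finite quotient), and as $L$ is finitely generated by step one, so is $\st_L(Y)$. Lemma~\ref{lem:well-approximated} says that $G$, having well-approximated subgroups, is LERF, so every finitely generated subgroup of $G$ is separable; in particular $\st_L(Y)$ is separable in $G$.

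With both hypotheses verified, Lemma~\ref{lem:comm_branch} applies directly and yields $|\comm_G(L):L| < \infty$. There is no real obstacle here: the corollary is essentially a repackaging of Lemma~\ref{lem:comm_branch} once one has the characterization of subgroups outside the perfect kernel given by Theorem~\ref{thm:rank_char}. The only point requiring a small observation is passing from $\rrist_L(Y)$ to $\st_L(Y)$, which is immediate from the inclusion $\rrist_L(Y) \leq \st_L(Y)$.
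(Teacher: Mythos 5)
Your argument is correct and is essentially the paper's own proof: both obtain from Theorem~\ref{thm:rank_char} a spanning leaf set $Y$ with $\st_L(Y)$ an infra-direct product of $G^Y$ and then apply Lemma~\ref{lem:comm_branch}. The only differences are that you spell out the separability hypothesis of Lemma~\ref{lem:comm_branch} (finite index of $\st_L(Y)$ in the finitely generated group $L$ plus LERF from Lemma~\ref{lem:well-approximated}), which the paper leaves implicit, and that your step passing from the rigid stabilizer to the stabilizer is unnecessary, since in the paper's notation the subgroup appearing in the Grigorchuk--Nagnibeda alternative is already the pointwise stabilizer $\st_L(Y)$.
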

\begin{proof}
By Theorem \ref{thm:rank_char}, there is a spanning leaf set $Y$ such that $\rrist_L(Y)$ is an infra-direct product of $G^Y$. Lemma~\ref{lem:comm_branch} now implies that $|\comm_G(L):L|<\infty$.
\end{proof}

\section{Applications}

We here show that both the Grigorchuk group and the Gupta--Sidki $3$-group obey the Grigorchuk--Nagnibeda alternative. Our proof will rely on the following ``subgroup induction" theorem due to Grigorchuk--Wilson, for the Grigorchuk group, and Garrido, for the Gupta--Sidki $3$-group.

\begin{thm}[{\cite[Theorem 3]{GrWi03}, \cite[Theorem 6]{Ga16}}]\label{thm:subgroup_induction}
Let $G$ be either the Grigorchuk group or the Gupta--Sidki $3$-group. Let $\mc{X}$ be a family of subgroups of $G$ such that the following hold:
\begin{enumerate}[(i)]
\item $\{1\}\in \mc{X}$, and $G\in \mc{X}$.
\item If $H\in \mc{X}$, then $L\in \mc{X}$ for any $L\leq G$ for which $H$ is a finite index subgroup of $L$.
\item If $H$ is a finitely generated subgroup of $\st_G(1)$ and all first level sections of $H$ are in $\mc{X}$, then $H\in \mc{X}$.
\end{enumerate}
Then, all finitely generated subgroups of $G$ are elements of $\mc{X}$.
\end{thm}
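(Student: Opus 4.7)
The plan is to establish the statement by strong induction on a complexity invariant of finitely generated subgroups, exploiting the contracting self-similarity of $G$. Fix a standard generating set $S$ for $G$ ($\{a,b,c,d\}$ for the Grigorchuk group; $\{a,t\}$ for the Gupta--Sidki $3$-group), let $\ell(g)$ be the word length of $g$ in $S$, and define the complexity of a finitely generated subgroup $H$ by
\[
c(H) := \min\left\{\sum_{i=1}^k \ell(h_i) : \{h_1,\ldots,h_k\} \text{ generates } H\right\}.
\]
I would induct on $c(H)$, with base case $c(H)=0$ giving $H=\{1\}\in\mc{X}$ by (i).

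For the inductive step, assume the claim for all finitely generated subgroups of smaller complexity. If $H$ is finite, then the inclusion $\{1\}\leq H$ has finite index, so $H\in\mc{X}$ by (i) and (ii). Otherwise $H$ is infinite and $H_0 := H\cap\st_G(1)$ has finite index in $H$ (since $[G:\st_G(1)]=|X|$); in particular $H_0$ is finitely generated. By (ii) it suffices to prove $H_0\in\mc{X}$, and by self-similarity each first-level section $\psi_x(H_0)$ is a subgroup of $G$. The heart of the argument is to show $c(\psi_x(H_0)) < c(H)$ for every $x\in X$, so that by induction each section lies in $\mc{X}$, and then (iii) gives $H_0\in\mc{X}$.

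The main obstacle is this complexity decrease. Both groups are contracting: there exist constants $\lambda<1$ and $C\geq 0$ such that $\ell(\psi_x(g))\leq \lambda\ell(g)+C$ for every $g\in\st_G(1)$ and every $x\in X$. Starting from a near-optimal generating tuple for $H$, one must perform Nielsen-type transformations---permuting, inverting, multiplying, and replacing generators by commutators---to obtain a generating set for $H_0$ that lies in $\st_G(1)$ and whose total length is still controlled by $c(H)$. The contracting estimate then forces sections of strictly smaller total length once $c(H)$ exceeds the threshold $C/(1-\lambda)$. The finitely many remaining ``low complexity'' subgroups must be handled directly, using a classification of small subgroups of $G$ (this is the role of the finite nucleus). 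Controlling the Nielsen transformation while preserving the contraction bound is the central technical difficulty, and is the content of the arguments in \cite{GrWi03} and \cite{Ga16}, which carry this out using the specific combinatorics of the Grigorchuk and Gupta--Sidki nuclei respectively.
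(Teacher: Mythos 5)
There is a genuine gap, and in fact two. First, the crucial inductive step --- that $c(\psi_x(H_0))<c(H)$ for $H_0=H\cap\st_G(1)$ --- is asserted rather than proved, and as stated it is not true. Passing from $H$ to the finite-index subgroup $H_0$ requires Reidemeister--Schreier (or your ``Nielsen-type'') generators, which can multiply the number of generators by the index $[H:H_0]$ and increase the total length by a comparable factor; the contraction estimate $\ell(\psi_x(g))\leq\lambda\ell(g)+C$ (with $\lambda=1/2$-type constants for these groups) then need not produce any net decrease, since the gain per element is offset by the blow-up in the number and length of the new generators. Even ignoring the Schreier issue, strict decrease can fail outright for short generators: in the Grigorchuk group, $H=\grp{b}$ lies in $\st_G(1)$ and has section $\grp{c}$ at one coordinate, of exactly the same complexity, and $c$ in turn has section $d$, etc.; so an induction on total generator length does not close without substantially more structural input. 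Second, you explicitly defer ``the central technical difficulty'' --- controlling the rewriting process compatibly with contraction, and disposing of the low-complexity subgroups --- to \cite{GrWi03} and \cite{Ga16}. But that deferred content \emph{is} the theorem: what remains in your text is the easy formal skeleton (intersect with $\st_G(1)$, apply (ii) and (iii)), so the proposal is circular as a proof.

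For comparison: the paper does not prove this statement at all; it is imported as a black box, attributed to Grigorchuk--Wilson for the Grigorchuk group and to Garrido for the Gupta--Sidki $3$-group, and the actual arguments in those papers are considerably more delicate than a naive induction on the sum of word lengths of generators. So either cite the result, as the paper does, or supply the genuinely hard estimates; the present sketch does neither.
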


The case of the Grigorchuk group in the next theorem already follows from \cite{GN08}, although their proof has not yet been published.
\begin{thm}\label{thm:G-N_alternative}
Both the Grigorchuk group and the Gupta--Sidki $3$-group obey the Grigorchuk--Nagnibeda alternative. 
\end{thm}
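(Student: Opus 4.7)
The plan is to apply the subgroup induction principle (Theorem~\ref{thm:subgroup_induction}) to the family
\[
\mathcal{X} := \{H \leq G \mid H \text{ satisfies case (a) or case (b) of the Grigorchuk--Nagnibeda alternative}\}.
\]
Verifying the three hypotheses of Theorem~\ref{thm:subgroup_induction} for $\mathcal{X}$ will yield that every finitely generated subgroup of $G$ lies in $\mathcal{X}$, which is exactly the statement of the theorem. Hypothesis (i) is immediate: the trivial group satisfies (a), and $G$ itself satisfies (b) with $Y = X$, since strong self-replication gives $\psi_x(\st_G(1)) = G$ for each $x \in X$. For hypothesis (ii), suppose $H \in \mathcal{X}$ has finite index in $L$. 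If $H$ satisfies (a) at $y$, then $\st_L(y) \supseteq \st_H(y)$ with finite index, so $\psi_y(\st_L(y))$ is finite, and decomposing $L$ into finitely many cosets of $\st_L(y)$ shows $\psi_y(L)$ is finite. If $H$ satisfies (b) via a spanning leaf set $Y$, then $\st_L(Y) \supseteq \st_H(Y)$, so each $\psi_y(\st_L(Y))$ contains the finite-index subgroup $\psi_y(\st_H(Y))$ of $G$ and is therefore itself of finite index in $G$.

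Hypothesis (iii) is the crux. Let $H \leq \st_G(1)$ be finitely generated with $H_x := \psi_x(H) \in \mathcal{X}$ for every $x \in X$. If some $H_x$ satisfies (a) at $y \in X^*$, then $\psi_{xy}(H) = \psi_y(H_x)$ is finite, so $H$ itself satisfies (a) via $xy$. Otherwise, each $H_x$ satisfies (b) via a spanning leaf set $Y_x \subseteq X^*$ with $\st_{H_x}(Y_x)$ an infra-direct product of $G^{Y_x}$, and we set $Y := \bigcup_{x \in X} x Y_x$, a spanning leaf set in $X^*$. It remains to show $\st_H(Y)$ is an infra-direct product in $G^Y$.

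Using the embedding $\psi_X : \st_G(1) \hookrightarrow G^X$, write $\tilde H := \psi_X(H)$, which projects onto $H_x$ in the $x$-th coordinate. Since each $\st_G(Y_x)$ has finite index in $G$, the subgroup $N := \tilde H \cap \prod_{x \in X} \st_G(Y_x) = \psi_X(\st_H(Y))$ has finite index in $\tilde H$. Writing $P_x$ for the projection of $N$ onto the $x$-th coordinate, the standard fact that projection cannot increase index gives $[H_x : P_x] < \infty$; combined with $P_x \leq \st_{H_x}(Y_x) \leq H_x$, this forces $P_x$ to have finite index in $\st_{H_x}(Y_x)$. For $y = xw \in Y$ with $w \in Y_x$,
\[
\psi_y(\st_H(Y)) \;=\; \psi_w(\psi_x(\st_H(Y))) \;=\; \psi_w(P_x),
\]
which is a finite-index subgroup of $\psi_w(\st_{H_x}(Y_x))$, and the latter has finite index in $G$ by the infra-direct hypothesis on $H_x$. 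Therefore $\psi_y(\st_H(Y))$ has finite index in $G$, and $\st_H(Y)$ is an infra-direct product of $G^Y$, establishing (b) for $H$.

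The main obstacle is the bookkeeping in hypothesis (iii): upon intersecting $\tilde H$ with $\prod_x \st_G(Y_x)$ one must check that the coordinate projections $P_x$ do not shrink too much to remain infra-direct, and the key observation is that they can only lose a finite-index factor from $\st_{H_x}(Y_x)$, which is precisely the slack the infra-direct condition tolerates. Once (i)--(iii) are verified, Theorem~\ref{thm:subgroup_induction} applies uniformly to both the Grigorchuk group and the Gupta--Sidki $3$-group, completing the proof.
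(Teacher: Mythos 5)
Your proposal is correct and follows essentially the same route as the paper: apply the subgroup induction theorem to the family $\mathcal{X}$ of subgroups satisfying one of the two cases of the alternative, with (i) and (ii) handled exactly as in the paper and the key step (iii) argued by combining the section-wise spanning leaf sets into $Y=\bigcup_{x\in X}xY_x$ and observing that $\psi_x(\st_H(Y))$ has finite index in $H_x$, hence in $\st_{H_x}(Y_x)$, so all sections at vertices of $Y$ remain of finite index in $G$. Your bookkeeping via $\psi_X$ and the intersection with $\prod_x \st_G(Y_x)$ is just a more explicit phrasing of the paper's argument.
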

\begin{proof}
Let $G$ be the Gupta--Sidki $3$-group; the proof for the Grigorchuk group is similar. Let $\mc{X}$ be the collection of subgroups that satisfy the Grigorchuk--Nagnibeda alternative. That is to say, $\mc{X}$ is the collection of all subgroups $H$ such that either $|\psi_y(H)|<\infty$ for some $y\in X^*$, or there is a spanning leaf set $Y$ such that $\rrist_H(Y)$ is an infra-direct product of $G^Y$. It suffices to show $\mc{X}$ that satisfies the conditions of Theorem~\ref{thm:subgroup_induction}.

That condition (i) holds is immediate. For (ii),  suppose that $H\in \mc{X}$ and $L\leq G$ is such that $H\leq L$ with $|L:H|<\infty$. First, if there is a spanning leaf set $Y$ such that $\rrist_H(Y)$ is an infra-direct product of $G^Y$, then clearly $\rrist_L(Y)$ is an infra-direct product of $G^Y$. Hence, $L\in \mc{X}$. Suppose next that $H$ is such that $\psi_y(H)$ is finite for some $y\in X^*$. The subgroup $\psi_y(\st_L(y))$ is a finite extension of $\psi_y(\st_H(y))$ and is thus also finite. It follows that $\psi_y(L)$ is finite. We conclude that $L \in \mc{X}$.

Finally, let us argue for (iii). Suppose that $H\leq \st_G(1)$ and $H_i:=\psi_i(H)\in \mc{X}$ for all $i\in X=\{0,1,2\}$. If $\psi_v({H_i})$ is finite for some $i$, then $\psi_{iv}(H)$ is finite, so $H\in \mc{X}$. Otherwise, say that $Y_i$ is a spanning leaf set such that $\rrist_{H_i}(Y_i)$ is an infra-direct product of $G^{Y_i}$. Put $Y
:=0Y_0\cup 1Y_1\cup 2Y_2$ and consider $\rrist_H(Y)$. The group $\rrist_H(Y)$ is of finite index in $H$, and $\psi_i(\rrist_{H}(Y))$ is contained in $\rrist_{H_i}(Y_i)$. Hence, $\psi_i(\rrist_H(Y))$ is of finite index in $\rrist_{H_i}(Y)$. For each $y\in Y_i$, we conclude that $\psi_{iy}(\rrist_H(Y))=\psi_y \circ \psi_i(\rrist_H(Y))$ is of finite index in $G$. It now follows that $\rrist_H(Y)$ is an infra-direct product of $G^Y$.
\end{proof}

It is well-known that both the Grigorchuk group and the Gupta--Sidki $3$-group  are finitely generated regular branch groups that are just infinite and have well-approximated subgroups (i.e.\ have CSP and are LERF); see \cite{bgs03,Ga16,Gr00, GrWi03}. They are both strongly self-replicating by \cite[Page 673]{GrWi03} and \cite[Proposition 2.1]{Ga16} or \cite[Propositions 5.1 and 5.3]{UA16}.

\begin{cor}\label{cor:grigorchuk} For $G$ either the Grigorchuk group or the Gupta--Sidki $3$-group, the Cantor--Bendixson rank of $\sub(G)$ is $\omega$.
\end{cor}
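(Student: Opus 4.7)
The plan is to reduce the corollary to a direct invocation of Corollary~\ref{cor:CB-rank_GN-alt} by verifying that the Grigorchuk group and the Gupta--Sidki $3$-group satisfy each of the hypotheses appearing there: being a finitely generated regular branch group, being just infinite, being strongly self-replicating, having well-approximated subgroups, and obeying the Grigorchuk--Nagnibeda alternative.

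First, I would collect the classical structural facts. Both groups are well known to be finitely generated regular branch groups that are just infinite; this was established for the Grigorchuk group in \cite{Gr00} and for the Gupta--Sidki $3$-group in \cite{bgs03, Ga16}. The CSP holds for the Grigorchuk group by \cite{Gr00} and for the Gupta--Sidki $3$-group by \cite{bgs03, Ga16}. The LERF property is established in \cite{GrWi03} for the Grigorchuk group and in \cite{Ga16} for the Gupta--Sidki $3$-group. Combining CSP and LERF via Lemma~\ref{lem:well-approximated}, both groups have well-approximated subgroups. The strongly self-replicating property is explicitly verified in \cite[Page 673]{GrWi03} and \cite[Proposition 2.1]{Ga16} (or, alternatively, in \cite[Propositions 5.1 and 5.3]{UA16}).

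Second, Theorem~\ref{thm:G-N_alternative} exactly establishes the remaining hypothesis, namely that both groups obey the Grigorchuk--Nagnibeda alternative.

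With all hypotheses of Corollary~\ref{cor:CB-rank_GN-alt} in hand, that corollary immediately yields $\mathrm{rk}_{\mathrm{CB}}(\sub(G))=\omega$ for each of the two groups in question. There is no real obstacle here; the only substantive point is bookkeeping the correct citations for each structural property, and this has already been done in the paragraph preceding the corollary statement, so the write-up can be a short appeal to those references together with Corollary~\ref{cor:CB-rank_GN-alt}.
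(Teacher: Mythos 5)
Your proposal is correct and matches the paper's own argument: the paper likewise verifies the structural hypotheses (finitely generated regular branch, just infinite, strongly self-replicating, well-approximated subgroups via CSP and LERF) by citation, invokes Theorem~\ref{thm:G-N_alternative} for the Grigorchuk--Nagnibeda alternative, and then applies Corollary~\ref{cor:CB-rank_GN-alt}. No gaps to report.
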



\bibliographystyle{amsplain}

\providecommand{\bysame}{\leavevmode\hbox to3em{\hrulefill}\thinspace}
\providecommand{\MR}{\relax\ifhmode\unskip\space\fi MR }
\providecommand{\MRhref}[2]{%
  \href{http://www.ams.org/mathscinet-getitem?mr=#1}{#2}
}
\providecommand{\href}[2]{#2}

\end{document}